\newcommand{\U}{{\mathcal U}}
\newcommand{\0}{{\mathbf 0}}
\newcommand{\C}{{\mathbb C}}
\newcommand{\Z}{{\mathbb Z}}
\newcommand{\Q}{{\mathbb Q}}
\newcommand{\N}{{\mathbb N}}
\newcommand{\cL}{{\mathbb L}}
\newcommand{\W}{{\mathcal W}}
\newcommand{\strat}{{\mathfrak S}}
\newcommand{\hyp}{{\mathbb H}}
\newcommand{\rank}{\mathop{\rm rank}\nolimits}
\newcommand{\arrow}[1]{\stackrel{#1}{\longrightarrow}}
\newcommand{\Adot}{\mathbf A^\bullet}
\newcommand{\Bdot}{\mathbf B^\bullet}
\newcommand{\Cdot}{\mathbf C^\bullet}
\newcommand{\Fdot}{\mathbf F^\bullet}
\newcommand{\Udot}{\mathbf U^\bullet}
\newcommand{\Kdot}{\mathbf K^\bullet}
\newcommand{\p}{{\mathbf p}}
\newcommand{\cc}{{\operatorname{CC}}}
\newcommand{\vdual}{{\mathcal D}}
\newcommand{\call}{{\mathcal L}}
\newcommand{\lotimes}{\ {\overset{L}{\otimes}}\ }
\newcommand{\lboxtimes}{\ {\overset{L}{\boxtimes}\ }}
\newcommand{\piten}{{\pi_1^*\Adot\lotimes\pi_2^*\Bdot}}
\newtheorem{defn0}{Definition}[section]
\newtheorem{prop0}[defn0]{Proposition}
\newtheorem{conj0}[defn0]{Conjecture}
\newtheorem{thm0}[defn0]{Theorem}
\newtheorem{lem0}[defn0]{Lemma}
\newtheorem{corollary0}[defn0]{Corollary}
\newtheorem{example0}[defn0]{Example}
\newtheorem{remark0}[defn0]{Remark}
\newtheorem{question0}[defn0]{Question}
\newenvironment{defn}{\begin{defn0}}{\end{defn0}}
\newenvironment{prop}{\begin{prop0}}{\end{prop0}}
\newenvironment{thm}{\begin{thm0}}{\end{thm0}}
\newenvironment{cor}{\begin{corollary0}}{\end{corollary0}}
\newenvironment{exm}{\begin{example0}\rm}{\end{example0}}
\newenvironment{rem}{\begin{remark0}\rm}{\end{remark0}}
\newcommand{\defref}[1]{Definition~\ref{#1}}
\newcommand{\propref}[1]{Proposition~\ref{#1}}
\newcommand{\thmref}[1]{Theorem~\ref{#1}}
\newcommand{\corref}[1]{Corollary~\ref{#1}}
\newcommand{\exref}[1]{Example~\ref{#1}}
\newcommand{\secref}[1]{Section~\ref{#1}}
\newcommand{\remref}[1]{Remark~\ref{#1}}
\title[Characteristic Cycles and Euler Obstructions]{Characteristic Cycles and the Relative Local Euler Obstruction}
\subjclass[2010]{32B15, 32C35, 32C18, 32B10}
\keywords{characteristic cycle, constructible complexes, local Euler obstruction, relative local Euler obstruction}
\author{David B. Massey}
\date{}
\begin{document}

\begin{abstract} In this paper, we investigate the local Euler obstruction and the relative local Euler obstruction in terms of constructible complexes of sheaves, characteristic cycles, and vanishing cycles. The fundamental tool that we use is the notion of a characteristic complex for an analytic space embedded in affine space.
\end{abstract}

\maketitle




\section{Introduction} 

The local Euler obstruction, defined by MacPherson in \cite{maceuler} in 1974 has been studied by many researchers (see, for instance, \cite{gonzsprin}, \cite{bdk}, \cite{leteissierpolar}, \cite{bss}, and \cite{ebelingzade}) and is, at this point, a standard pice of data associated to a singular point of a complex analytic space. The local Euler obstruction is an obstruction to extending a stratified radial vector field to a non-zero lift in the Nash modification.

The relative local Euler obstruction, defined in \cite{bmps}, is an analog of the local Euler obstruction for a complex analytic function $f:X\rightarrow\C$ at a point $\p$ which is a stratified isolated critical point of $f$. This is again defined in an obstruction-theoretic way; it is an obstruction to extending the conjugate of the stratified gradient vector field of $f$  to a non-zero lift in the Nash modification. This relative concept is beginning to be studied by a number of other researchers; see, for instance, \cite{ebelingzade2}, \cite{dutertregrulha},  \cite{seadetibarverj}, and \cite{amentetal}.

Our own contribution to  \cite{bmps} appeared in the last section of that paper, where we used derived category techniques to extend the definition of the relative local Euler obstruction to functions with arbitrary critical loci, but - in \cite{bmps} - we referred to this generlaized concept as the {\it defect} of $f$. We also gave an algorithm for calculating the defect of $f$  via a process similar to how we defined the L\^e cycles, L\^e numbers, and their generalizations in \cite{lv1}, \cite{lv2}, \cite{lecycles}, and especially in Remark 1.6 of Part IV of \cite{numcontrol}.

\medskip

In this paper, we recall our earlier characterization of the local Euler obstruction in terms of {\it characteristic complexes} and recall and re-derive some standard properties of the local Euler obstruction. We then recall our general definition/characterization of the defect of $f$ - which we now take as the general definition of the relative local Euler obstruction - in terms of vanishing cycles and characteristic complexes. Finally,  in \thmref{thm:releuler}, we prove a number of basic properties for the relative local Euler obstruction and give some examples.

\medskip

We must begin with a section on the basics of characteristic cycles. Throughout this section and much of this paper, we must assume that the reader is familiar with fundamental aspects of the derived category of bounded constructible complexes of sheaves, perverse sheaves, and the nearby and vanishing cycles. Good references for the theory are \cite{kashsch}, \cite{dimcasheaves}, and \cite{schurbook}.

\medskip

We thank J\"org Sch\"urmann for valuable comments on the first version of this paper.

\section[Characteristic Cycles]{Characteristic Cycles}\label{sec:gecc}

A general reference for details of this section is \cite{singenrich}. 

\smallskip

Throughout this paper, we fix a base ring
$R$ that is a regular, Noetherian ring with finite Krull dimension (e.g., $\Z$,
$\Q$, or  $\C$). This implies that every finitely-generated $R$-module has finite projective dimension (in fact, it
implies that the projective dimension of the module is at most $\dim R$). In fact, in later sections, we fix our base as $\Z$.

\smallskip

We let $\U$ be an open neighborhood of the origin of $\C^{n+1}$, and let $X$ be a closed, analytic subset of $\U$. We let $\mathbf z:=(z_0, \dots, z_n)$ be coordinates on $\U$. 

\smallskip

References for much of what we write below are  \cite{stratmorse} and \cite{numcontrol}.

\smallskip

Recall that the {\it complex link}, $\cL_{X, \mathbf p}$, of $X$ at $\mathbf p$ is the Milnor fiber of a generic affine linear form, restricted to $X$, at $\mathbf p$. That is, the complex link is 
$$
\cL_{X, \mathbf p} := B^\circ_\epsilon(\mathbf p)\cap X\cap V(L-b),
$$
where $B^\circ_\epsilon(\mathbf p)$ is an open ball in $\U$ of radius $\epsilon$, where $0<\epsilon\ll 1$, centered at $\mathbf p$, $L$ is a generic affine linear form which is zero at $\mathbf p$, and $b$ is a complex number such that $0<|b|\ll\epsilon$. The homotopy-type of the complex link is an analytic invariant of the germ of $X$ at $\mathbf p$.

\bigskip

Let $\strat$ be a complex analytic Whitney stratification of $X$, with connected strata. Let $\Fdot$ be a bounded complex of sheaves of $R$-modules on $X$, which is constructible with respect to $\strat$. For each $S\in\strat$, we let $d_S:=\dim S$, and let $(\N_{X, S}, \cL_{X, S})$ denote {\it complex Morse data for $S$ in $X$}, consisting of a normal slice and complex link of $S$ in $X$.  Recall that, if $\mathbf p\in S$, then $\cL_{X, S}$ is the complex link of the normal slice to $S$ at $\mathbf p$, i.e., $\cL_{X, S}=\cL_{\N_{X, S}, \mathbf p}$. The homeomorphism-type of the pair $(\N_{X, S}, \cL_{X, S})$ is independent of the choices.
\smallskip

\begin{defn} For each $S\in\strat$ and each integer $k$, the isomorphism-type of the $R$-module
$m_S^k(\Fdot):=\hyp^{k-d_S}(\N_{X, S}, \cL_{X, S}; \Fdot)$ is independent of the choice of $(\N_{X, S}, \cL_{X, S})$; we
refer to
$m_S^k(\Fdot)$ as the {\it degree $k$ Morse module of $S$ with respect to
$\Fdot$}.
\end{defn}

\smallskip

\begin{rem}\label{rem:morsemod} The shift by $d_S$ above is present so that perverse sheaves can have non-zero Morse modules in only degree $0$. 

We also remark that, up to isomorphism, $m_S^k(\Fdot)$ can be obtained in terms of vanishing cycles. To accomplish this, select any point $\p\in S$. Consider an analytic function $\tilde g:(\U^\prime, \p)\rightarrow (\C,0)$  on some open neighborhood of $\p$ in $\U$ such that $d_\p \tilde g$ is a nondegenerate covector (in the sense of \cite{stratmorse}), and such that $\p$ is a (complex) nondegenerate critical point of $\tilde g_{|_{\U^\prime\cap S}}$. Let $g:=\tilde g_{|_{\U^\prime\cap X}}$. Then, $m_S^k(\Fdot)$ is isomorphic to the stalk cohomology $H^k(\phi_g[-1]\Fdot)_{\p}$. 

 Note that, if $\0$ is a point-stratum, then $m^k_\0(\Fdot)\cong H^k(\phi_{\call}[-1]\Fdot)_\0$, where $\call$ is the restriction to $X$ of a generic linear form $\tilde\call$. In particular, if $\0$ is a point-stratum which is not an isolated point of $X$, 
 $$
 m^k_\0(\Z^\bullet_X)\cong H^k(\phi_{\call}[-1]\Z^\bullet_X)_\0\cong \hyp^{k}(\N_{X, \0}, \cL_{X, \0}; \Z^\bullet_X)\cong\widetilde H^{k-1}(\cL_{X, \0}; \Z),
 $$
 where, in the final term on the right, $\widetilde H$ denotes the usual reduced singular cohomology of the complex link of $X$ at $\0$.
 
 \smallskip
 
 Finally, if $\mathbf p$ is in a stratum $S_0$, and $\N_{X, S_0}$ is a normal slice to $S$ in $X$ at $\mathbf p$, then $\N_{X, S_0}$ is stratified by 
 $$\{\N_{X, S_0}\cap S \ | \ S\in \strat\},$$
 though the strata need not be connected. Nonetheless, if $Y:=\N_{X, S_0}$ and $\hat S:=Y\cap S$, it is trivial that, up to homeomorphism, $(\N_{Y, \hat S}, \cL_{Y, \hat S})$ (at any point of $\hat S$) is given by $(\N_{X, S}, \cL_{X, S})$, simply because transverse intersections of transverse intersections are transverse intersections.  
 
 \end{rem}

\bigskip

Now, for any analytic submanifold $M\subseteq\U$, we denote the conormal space $$\{(\p, \omega)\in T^*\U\ |\ \omega(T_{\p}M)\equiv 0\}$$ by $T^*_M\U$, and will typically be interested in its closure $\overline{T^*_M\U}$ in $T^*\U$.

\medskip

\begin{defn}\label{def:charcycle}
Suppose that $R$ is an integral domain.

Define $c_S(\Fdot):=\sum_{k\in\Z} (-1)^{k}\rank(m^k_S(\Fdot))$, and define the {\bf characteristic cycle of $\Fdot$ (in $T^*\U$)} to be the analytic cycle
$$\cc(\Fdot):=\sum_{S\in\strat}c_S(\Fdot)\left[\overline{T^*_S\U}\right].
$$
We write $c_\0(\Fdot)$ in place of $c_{\{\0\}}(\Fdot)$, and let $c_\0(\Fdot)=0$ if $\{\0\}\not\in\strat$.

\medskip

The underlying set $\left|\cc(\Fdot)\right|=\bigcup_{c_S(\Fdot)\neq 0}\overline{T^*_S\U}$ is the {\bf characteristic variety of $\Fdot$ (in $T^*\U$)}.

\medskip

For $0\leq k\leq \dim X$, let 
$$\cc_{\geq k}(\Fdot) \ :=\sum_{S\in\strat, \dim S\geq k}c_S(\Fdot)\left[\overline{T^*_S\U}\right].
$$

\end{defn}

\smallskip

Throughout this paper, whenever we refer to $c_S(\Fdot)$ or $\cc(\Fdot)$, we assume that the base ring is an integral domain, even if we do not explicitly state this.

\smallskip

\begin{rem}  There are various conventions for the signs involved in the characteristic cycle. In fact, our definition above uses a different convention than we used in our earlier works. Our current definition is the most desirable when working with perverse sheaves. In hopes of avoiding confusion with our earlier work, we have changed our notation for the characteristic cycle. Note that, using our current convention, the characteristic cycle is not changed by extending $\Fdot$ by zero to all of $\U$.
\end{rem}

\medskip

We give some basic, easy properties of the characteristic cycle concern how they work with shifting, constant sheaves, distinguished triangles, and the Verdier dual $\vdual\Fdot$. The proofs are all trivial, and we leave them to the reader.

\medskip

\vbox{
\begin{prop}\label{prop:basicccprops}

\ 

\begin{enumerate}
\item $\cc(\Fdot[j])=(-1)^j\cc(\Fdot)$.

\smallskip

\item If $X$ is a pure-dimensional (e.g., connected) complex manifold, then 
$$\cc(\mathbf R^\bullet_X) = (-1)^{\dim X}[T^*_X\U],$$
 i.e., $\cc(\mathbf R^\bullet_X[\dim X]) =[T^*_X\U]$.

\smallskip

\item If $\Adot\rightarrow\Bdot\rightarrow\Cdot\arrow{[1]}\Adot$ is a distinguished triangle in $D^b_c(X)$, then $\cc(\Bdot)=\cc(\Adot)+\cc(\Cdot)$.

\smallskip

\item $\cc(\Fdot) = \cc(\vdual\Fdot)$.
\end{enumerate}
\end{prop}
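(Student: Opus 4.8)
The plan is to verify each of the four items directly from \defref{def:charcycle}, since all of them reduce to simple statements about the Morse modules $m_S^k$ and the coefficients $c_S$. The only genuinely non-formal ingredient is \remref{rem:morsemod}, which lets us compute Morse modules as hypercohomology of complex Morse data (or stalks of vanishing cycles), and the elementary fact that, on a pure-dimensional complex manifold $M$, the only stratum is $M$ itself, whose conormal is $T^*_M\U$, and for which the complex link $\cL_{X, S}$ at a point is contractible (the Milnor fiber of a generic linear form on a smooth space) so that $\hyp^{k-\dim M}(\N, \cL; \mathbf R^\bullet_M)$ is $R$ in degree $k=\dim M$ and $0$ otherwise.

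For (1), shifting $\Fdot$ by $j$ shifts every $m_S^k$: $m_S^k(\Fdot[j]) = \hyp^{k-d_S}(\N_{X,S}, \cL_{X,S}; \Fdot[j]) \cong m_S^{k+j}(\Fdot)$, so $c_S(\Fdot[j]) = \sum_k (-1)^k \rank m_S^{k+j}(\Fdot) = (-1)^j c_S(\Fdot)$, and the claim follows termwise. For (2), the discussion above shows $c_M(\mathbf R^\bullet_M) = (-1)^{\dim M}$ and there are no other strata contributing, giving $\cc(\mathbf R^\bullet_X) = (-1)^{\dim X}[T^*_X\U]$; the reformulation with the shift is then immediate from (1). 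For (3), I would use the long exact sequence in hypercohomology of $(\N_{X,S}, \cL_{X,S})$ associated to the distinguished triangle $\Adot\to\Bdot\to\Cdot\arrow{[1]}$: it gives a long exact sequence of Morse modules $\cdots \to m_S^k(\Adot) \to m_S^k(\Bdot) \to m_S^k(\Cdot) \to m_S^{k+1}(\Adot) \to \cdots$, and since Euler characteristics (alternating sums of ranks) are additive along long exact sequences, $c_S(\Bdot) = c_S(\Adot) + c_S(\Cdot)$ for every $S$; summing over $\strat$ gives $\cc(\Bdot) = \cc(\Adot) + \cc(\Cdot)$.

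For (4), the point is that Verdier duality exchanges the roles that produce the Morse modules but does not change their ranks. Concretely, using the vanishing-cycle description from \remref{rem:morsemod}: for $\p\in S$ and $\tilde g$ as there, $m_S^k(\Fdot) \cong H^k(\phi_g[-1]\Fdot)_\p$, and since the vanishing cycle functor commutes with Verdier duality (up to the standard shift already built into the $[-1]$ normalization, so that $\phi_g[-1]$ commutes with $\vdual$ exactly), one gets $H^k(\phi_g[-1]\vdual\Fdot)_\p \cong H^k(\vdual(\phi_g[-1]\Fdot))_\p$, whose rank equals the rank of $H^{-k}(\phi_g[-1]\Fdot)_\p$ over the fraction field (local duality on a point); since we are taking alternating sums of ranks, $c_S(\vdual\Fdot) = \sum_k (-1)^k \rank m_S^{-k}(\Fdot) = \sum_k (-1)^{-k}\rank m_S^{k}(\Fdot) = c_S(\Fdot)$. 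Summing over strata gives $\cc(\Fdot) = \cc(\vdual\Fdot)$. The one step that requires a little care — and hence the main (mild) obstacle — is item (4): one must make sure the shift conventions are such that $\phi_g[-1]$ commutes with $\vdual$ on the nose, and that passing to ranks of stalk cohomology of a dualizing complex over an integral domain $R$ (where one works over the fraction field, all finitely generated modules having finite projective dimension by the standing hypothesis on $R$) genuinely converts degree $k$ to degree $-k$ without introducing correction terms; once that bookkeeping is pinned down, the result is a one-line alternating-sum manipulation. All of these are, as the paper says, routine, so I would simply indicate the mechanism and leave the details to the reader.
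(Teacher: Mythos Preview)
The paper itself does not give a proof of this proposition; it simply states ``The proofs are all trivial, and we leave them to the reader.'' So there is no argument to compare against, and your approach---unwinding \defref{def:charcycle} stratum by stratum---is exactly the intended one.

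Your arguments for (1), (3), and (4) are correct. There is one small slip in your discussion of (2): for the top-dimensional stratum $S=M$ of a pure-dimensional complex manifold, the normal slice $\N_{X,S}$ is a single point and the complex link $\cL_{X,S}$ is therefore \emph{empty}, not contractible. (What you describe---the Milnor fiber of a generic linear form on a smooth space---is the complex link of $X$ at the point viewed as a $0$-stratum, which is a different object.) This matters: if $\cL_{X,S}$ were genuinely contractible and nonempty, then $\hyp^*(\N_{X,S},\cL_{X,S};\mathbf R^\bullet_M)$ would vanish identically and you would get $c_M=0$. With $\cL_{X,S}=\emptyset$ one has $\hyp^{k-\dim M}(\text{pt},\emptyset;\mathbf R^\bullet_M)\cong R$ for $k=\dim M$ and $0$ otherwise, which is the conclusion you state. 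Once that is corrected, (2) is fine.
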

}

\medskip

For calculating the characteristic cycle of the constant sheaf, the following is very useful:

\begin{cor} Suppose that $Y$ and $Z$ are closed analytic subsets of $X$ such that $X=Y\cup Z$. Then, 
$$\cc(\mathbf R^\bullet_X)=\cc(\mathbf R^\bullet_Y)+\cc(\mathbf R^\bullet_Z)-\cc(\mathbf R^\bullet_{Y\cap Z}).
$$
\end{cor}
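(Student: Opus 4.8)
The plan is to deduce the formula from the Mayer--Vietoris distinguished triangle for the constant sheaf attached to the closed cover $X=Y\cup Z$, combined with the additivity of $\cc$ over distinguished triangles recorded in \propref{prop:basicccprops}(3). Since $Y$, $Z$, and $Y\cap Z$ are closed analytic subsets of $X$, I would first fix a complex analytic Whitney stratification of $X$ in which each of $Y$, $Z$, $Y\cap Z$ is a union of strata; then the complexes $\mathbf R^\bullet_X$, $\mathbf R^\bullet_Y$, $\mathbf R^\bullet_Z$, $\mathbf R^\bullet_{Y\cap Z}$ (each extended by zero to $X$, which by the preceding remark does not affect $\cc$) all lie in $D^b_c(X)$, so that every term in the claimed identity is defined.

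Next I would write down the short exact sequence of sheaves on $X$
$$0 \longrightarrow \mathbf R^\bullet_X \longrightarrow \mathbf R^\bullet_Y \oplus \mathbf R^\bullet_Z \longrightarrow \mathbf R^\bullet_{Y\cap Z} \longrightarrow 0,$$
in which the first map sends a local section $s$ to $(s|_Y, s|_Z)$ and the second sends $(a,b)$ to $a|_{Y\cap Z}-b|_{Y\cap Z}$. Exactness is checked on stalks: at a point of $X\setminus(Y\cap Z)$ one uses that the point lies in exactly one of $Y\setminus Z$ or $Z\setminus Y$ (using $X=Y\cup Z$), and at a point of $Y\cap Z$ it is immediate. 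This short exact sequence yields a distinguished triangle
$$\mathbf R^\bullet_X \longrightarrow \mathbf R^\bullet_Y \oplus \mathbf R^\bullet_Z \longrightarrow \mathbf R^\bullet_{Y\cap Z} \arrow{[1]} \mathbf R^\bullet_X$$
in $D^b_c(X)$.

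Then I would apply \propref{prop:basicccprops}(3) twice. Applied to the split triangle $\mathbf R^\bullet_Y \to \mathbf R^\bullet_Y\oplus\mathbf R^\bullet_Z \to \mathbf R^\bullet_Z \arrow{0} \mathbf R^\bullet_Y$ it gives $\cc(\mathbf R^\bullet_Y\oplus\mathbf R^\bullet_Z)=\cc(\mathbf R^\bullet_Y)+\cc(\mathbf R^\bullet_Z)$; applied to the Mayer--Vietoris triangle above it gives $\cc(\mathbf R^\bullet_Y\oplus\mathbf R^\bullet_Z)=\cc(\mathbf R^\bullet_X)+\cc(\mathbf R^\bullet_{Y\cap Z})$. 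Equating the two right-hand sides and rearranging produces the asserted identity $\cc(\mathbf R^\bullet_X)=\cc(\mathbf R^\bullet_Y)+\cc(\mathbf R^\bullet_Z)-\cc(\mathbf R^\bullet_{Y\cap Z})$.

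The only step with any real content is the exactness of the Mayer--Vietoris sequence of constant sheaves for a closed cover, and this is a routine stalkwise verification; everything after that is bookkeeping with \propref{prop:basicccprops}. (Alternatively one could dualize via \propref{prop:basicccprops}(4) and use the open cover with extension-by-zero functors, but the closed-cover sequence is the most direct route.)
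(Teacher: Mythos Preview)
Your proof is correct and follows essentially the same route as the paper: the Mayer--Vietoris distinguished triangle for the closed cover $X=Y\cup Z$, followed by the additivity of $\cc$ from \propref{prop:basicccprops}(3). The paper phrases the triangle in terms of the inclusions $j,k,l$ and the functors $j_*j^*$, etc., and simply cites it as canonical, whereas you spell out the underlying short exact sequence of sheaves and verify exactness on stalks; but these are the same argument at different levels of detail.
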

\begin{proof} Let $j:Y\hookrightarrow X$, $k:Z\hookrightarrow X$, and $l:Y\cap Z\hookrightarrow X$  denote the respective inclusions. Then, there is a canonical distinguished triangle
$$
\mathbf R^\bullet_X\rightarrow j_*j^*\mathbf R^\bullet_X\oplus k_*k^*\mathbf R^\bullet_X\rightarrow l_*l^*\mathbf R^\bullet_X\arrow{[1]}\mathbf R^\bullet_X.
$$
As the pull-back of the constant sheaf is the constant sheaf, and as the characteristic cycle is unaffected by extensions by zero, the desired conclusion follows immediately from Item 3 of \propref{prop:basicccprops}.
\end{proof}

\medskip

It is also easy to describe how the characteristic cycle of normal slices to strata depend on the original characteristic cycle. We use the set-up and remark at the end of \remref{rem:morsemod}.

\begin{prop}\label{prop:charslice} Suppose that $\mathbf p$ is in a stratum $S_0$ of $\strat$, and $\N_{X, S_0}$ is a normal slice to $S$ in $X$ at $\mathbf p$, then $\N_{X, S_0}$ is stratified by 
 $$\{\N_{X, S_0}\cap S \ | \ S\in \strat\},$$
 though the strata need not be connected.  For $S\in\strat$, let $\hat S:=S\cap \N_{X, S_0}$.
 
 Then, we have an equality of Morse modules $m_{\hat S}^k(\Fdot_{|_{\N_{X, S_0}}}[-\dim S_0])=m_S^{k}(\Fdot)$, which implies that
 $$
\cc\big(\Fdot_{|_{\N_{X, S_0}}}[-\dim S_0]\big) = \sum_{S\in \strat}c_S(\Fdot)\left[\overline{T_{S\cap \N_{X, S_0}}^*\U}\right],
 $$
 (Note that a summand would be $0$ unless $S_0\subseteq\overline{S}$.)
\end{prop}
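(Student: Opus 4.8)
\emph{Proof proposal.} The plan is to reduce everything to the stated equality of Morse modules, after which the displayed characteristic-cycle formula drops out of \defref{def:charcycle}. Write $Y:=\N_{X, S_0}$, $\Gdot:=\Fdot_{|_Y}[-d_{S_0}]$, and $\hat S:=S\cap Y$ for $S\in\strat$. First I would dispose of the vacuous case: if $S_0\not\subseteq\overline S$, then, since $Y$ lies in a small ball centered at $\p\in S_0$ and the frontier condition forces $\p\notin\overline S$, we have $\hat S=\emptyset$ and the corresponding summand on the right is $0$, consistent with the asserted formula; so from now on assume $S_0\subseteq\overline S$. Since $Y$ is the intersection of $X$ with a generic affine linear subspace through $\p$, it is transverse to every stratum of $\strat$, so $\{\hat S\mid S\in\strat\}$ is a complex analytic Whitney stratification of $Y$, the complex $\Gdot$ is constructible with respect to it, and $d_{\hat S}=d_S-d_{S_0}$ (with $\hat S=\{\p\}$ when $S=S_0$). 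The strata $\hat S$ need not be connected, but --- as I note at the end --- the Morse data is constant along $\hat S$, so $c_{\hat S}(\Gdot)$ is still unambiguous (alternatively, refine $\{\hat S\}$ to connected strata and sum the resulting conormal cycles).

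Next I would prove the Morse module identity by choosing all the slices compatibly. Fixing $\p\in S_0$, realize $Y=B\cap X\cap V(\ell_1,\dots,\ell_{d_{S_0}})$ and a normal slice $\N_{Y,\hat S}=B'\cap Y\cap V(\ell'_1,\dots,\ell'_{d_{\hat S}})$ for sufficiently generic affine linear forms vanishing at $\p$. Then $\N_{Y,\hat S}$ is $X$ intersected with $d_{S_0}+d_{\hat S}=d_S$ generic affine hyperplanes through $\p$; this is exactly the ``transverse intersection of transverse intersections is a transverse intersection'' observation recorded at the end of \remref{rem:morsemod}, so $\N_{Y,\hat S}$ may be taken to be a normal slice $\N_{X, S}$ to $S$ in $X$ at $\p$, with $\cL_{Y,\hat S}=\cL_{X, S}$ and, by transitivity of restriction, $(\Fdot_{|_Y})_{|_{\N_{Y,\hat S}}}=\Fdot_{|_{\N_{X, S}}}$. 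Pulling the degree shift out of the hypercohomology of the pair then gives
\begin{align*}
m_{\hat S}^k(\Gdot)
&=\hyp^{k-d_{\hat S}}\big(\N_{Y,\hat S},\,\cL_{Y,\hat S};\,\Fdot_{|_Y}[-d_{S_0}]\big)\\
&=\hyp^{k-d_{\hat S}-d_{S_0}}\big(\N_{X, S},\,\cL_{X, S};\,\Fdot\big)
=\hyp^{k-d_S}\big(\N_{X, S},\,\cL_{X, S};\,\Fdot\big)=m_S^k(\Fdot),
\end{align*}
since $d_{\hat S}+d_{S_0}=d_S$. Taking alternating sums of ranks yields $c_{\hat S}(\Gdot)=c_S(\Fdot)$, and substituting into \defref{def:charcycle} applied to $\Gdot$ on $Y$ gives $\cc(\Gdot)=\sum_{S\in\strat}c_S(\Fdot)\big[\overline{T^*_{\hat S}\U}\big]$ (the conormals being taken in the cotangent bundle of the affine space containing $Y$, and empty strata contributing nothing), which is the claim.

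The step I expect to need the most care is the middle equality of that display: one must choose the two families of linear forms so that their union is generic for $S$ in $X$ --- this is exactly the transversality remark quoted above --- and then keep careful track of the fact that passing to a codimension-$d_{S_0}$ transverse slice shifts degrees by $d_{S_0}$, which is precisely why the statement takes $\Fdot_{|_Y}[-d_{S_0}]$ rather than the bare restriction. Finally, for the disconnectedness point used in the first paragraph: for any $q\in\hat S$ near $\p$, the same transversality argument carried out at $q$ identifies the Morse data of $\hat S$ in $Y$ at $q$ with that of $S$ in $X$ at $q$, which is independent of $q\in S$; hence $m_{\hat S}^\bullet(\Gdot)$ and $c_{\hat S}(\Gdot)$ do not depend on which point or component of $\hat S$ is used.
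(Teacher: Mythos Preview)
Your argument is correct and is essentially the same as the paper's: both invoke the observation at the end of \remref{rem:morsemod} that $(\N_{Y,\hat S},\cL_{Y,\hat S})=(\N_{X,S},\cL_{X,S})$, then compute $m_{\hat S}^k(\Fdot_{|_Y})=\hyp^{k-d_{\hat S}}(\N_{X,S},\cL_{X,S};\Fdot)=m_S^{k+d_{S_0}}(\Fdot)$ and absorb the shift. Your version is more expansive---you spell out the vacuous case $S_0\not\subseteq\overline S$, the disconnectedness issue, and the genericity of the combined linear forms---but the logical skeleton is identical to the paper's three-line computation.
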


\begin{proof} Let $Y:=\N_{X, S_0}$. Using the end of \remref{rem:morsemod}, we find
 $$m_{\hat S}^k(\Fdot_{|_Y})=\hyp^{k-\dim \hat S}(\N_{Y, \hat S}, \cL_{Y, \hat S}; \Fdot_{|_Y})=$$
 $$
\hyp^{k-\dim S +\dim S_0}(\N_{X, S}, \cL_{X, S}; \Fdot)=m_S^{k+\dim S_0}(\Fdot),
 $$
 i.e., $m_{\hat S}^k(\Fdot_{|_Y}[-\dim S_0])=m_S^{k}(\Fdot)$. The conclusions follow at once.
\end{proof}

\medskip Note that 
$$
\left[\overline{T_{S\cap \N_{X, S_0}}^*\U}\right] = \sum_{C\subset S\cap \N_{X, S_0}}\left[\overline{T_C^*\U}\right],
$$
where $C$ runs over the connected components of $S\cap \N_{X, S_0}$.

 \bigskip

The following proposition is immediate from formula 5.6 of \cite{schurbook}.

\begin{prop}\label{prop:boxprod}
Suppose that $R$ is a principal ideal domain. Let $X$ and $Y$ be analytic spaces, let $\pi_1:X\times Y\rightarrow X$ and $\pi_2:X\rightarrow Y$ denote the projections. Let $\strat$ and $\strat'$ be Whitney stratifications of $X$ and $Y$, respectively. Let $\Adot$ and $\Bdot$ be bounded,  complexes of sheaves on $X$ and $Y$, respectively, which are constructible with respect to $\strat$ and $\strat'$, respectively. Let $\Adot\lboxtimes\Bdot:=\piten$.

Then, $\Adot\lboxtimes\Bdot$ is constructible with respect to the product stratification 
$$\{S\times S'\ | \ S\in \strat, \ S'\in\strat'\}$$
 and, for all $S\in\strat$ and $S'\in \strat'$,

\medskip

$m_{S\times S'}^k\big(\Adot\lboxtimes\Bdot\big) =\hfill$

\smallskip

$\hfill\displaystyle\bigoplus_{i+j=k}m^i_S(\Adot)\otimes m^j_{S'}(\Bdot)  \ \oplus \ \bigoplus_{i+j=k+1}\operatorname{Tor}\big(m^i_S(\Adot), m^j_{S'}(\Bdot)\big)$.

\medskip

Consequently,
$$
c_{S\times S'}\big(\Adot\lboxtimes\Bdot\big)= c_S(\Adot)\cdot c_{S'}(\Bdot).
$$
\end{prop}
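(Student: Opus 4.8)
The plan is to deduce everything from the Künneth-type formula for Morse data under products, which the excerpt cites as formula 5.6 of \cite{schurbook}, and then to run a purely combinatorial computation on the alternating sums. First I would recall that the product stratification $\{S\times S' \mid S\in\strat,\ S'\in\strat'\}$ is indeed a Whitney stratification of $X\times Y$, and that $\Adot\lboxtimes\Bdot$ is constructible with respect to it; this is standard and follows since $\pi_1^*\Adot$ and $\pi_2^*\Bdot$ are each constructible with respect to the product stratification and the derived tensor product of complexes constructible with respect to a common stratification is again constructible with respect to it. Next I would identify the complex Morse data of the stratum $S\times S'$ in $X\times Y$: a normal slice to $S\times S'$ can be taken to be $\N_{X,S}\times\N_{Y,S'}$, and the complex link of the product is built from the complex links of the factors via a join-type construction, which is exactly the content of the cited formula — it expresses $\hyp^*$ of the Morse data of the product (with the appropriate shift by $d_{S\times S'}=d_S+d_{S'}$) in terms of the $\hyp^*$ of the Morse data of the factors. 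Unwinding that with the universal coefficient / Künneth spectral sequence (which degenerates since $R$ is a PID, so $\operatorname{Tor}$ in degrees $\geq 2$ vanishes and the sequence splits) yields precisely the displayed isomorphism
$$
m_{S\times S'}^k\big(\Adot\lboxtimes\Bdot\big) \cong \bigoplus_{i+j=k}m^i_S(\Adot)\otimes m^j_{S'}(\Bdot)\ \oplus\ \bigoplus_{i+j=k+1}\operatorname{Tor}\big(m^i_S(\Adot), m^j_{S'}(\Bdot)\big).
$$

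With the module-level formula in hand, the consequence for the multiplicities is a rank computation. I would take $\rank$ of both sides, using that rank is additive over finite direct sums and that $\rank\big(\operatorname{Tor}(A,B)\big)=0$ for finitely generated modules $A,B$ over a PID (torsion modules have rank zero, and $\operatorname{Tor}$ over a PID is always torsion). This gives $\rank\big(m_{S\times S'}^k(\Adot\lboxtimes\Bdot)\big)=\sum_{i+j=k}\rank\big(m^i_S(\Adot)\big)\cdot\rank\big(m^j_{S'}(\Bdot)\big)$, using additivity of rank under $\otimes$ over a PID as well. Then I multiply by $(-1)^k$ and sum over $k$; reindexing the double sum as a product of two alternating sums,
$$
c_{S\times S'}\big(\Adot\lboxtimes\Bdot\big)=\sum_k(-1)^k\!\!\sum_{i+j=k}\!\!\rank\big(m^i_S(\Adot)\big)\rank\big(m^j_{S'}(\Bdot)\big)=\Big(\sum_i(-1)^i\rank\, m^i_S(\Adot)\Big)\Big(\sum_j(-1)^j\rank\, m^j_{S'}(\Bdot)\Big),
$$
which is exactly $c_S(\Adot)\cdot c_{S'}(\Bdot)$. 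The rearrangement of the double sum is legitimate because all the Morse modules are finitely generated (they are hypercohomology of compact-pair data with coefficients in a bounded constructible complex), so only finitely many terms are nonzero.

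The main obstacle, such as it is, will be getting the index bookkeeping exactly right — in particular making sure the shift conventions in the definition $m_S^k(\Fdot)=\hyp^{k-d_S}(\N_{X,S},\cL_{X,S};\Fdot)$ are compatible with the shift appearing in formula 5.6 of \cite{schurbook}, since $d_{S\times S'}=d_S+d_{S'}$ and one must check that the two shifts combine correctly so that the $\operatorname{Tor}$ term lands in the stated degree $k+1$ rather than somewhere else. Once the bookkeeping is pinned down, the proof is essentially automatic: the isomorphism of modules is a citation plus the PID Künneth splitting, and the multiplicity identity is the elementary fact that the Euler characteristic of a tensor product of bounded complexes of finite-rank modules is the product of the Euler characteristics. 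I would present the module isomorphism first, then note that taking ranks and forming the alternating sum immediately yields the product formula for $c_{S\times S'}$, remarking that the $\operatorname{Tor}$ contributions are invisible at the level of ranks.
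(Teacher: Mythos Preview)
Your proposal is correct and follows the same approach as the paper, which simply states that the proposition ``is immediate from formula 5.6 of \cite{schurbook}'' without giving any further proof. You have just spelled out the details the paper leaves implicit --- the constructibility of the box product, the K\"unneth splitting over a PID, and the rank computation showing the $\operatorname{Tor}$ terms vanish in the alternating sum --- all of which are exactly the steps one would fill in to justify ``immediate.''
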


\bigskip

We also have the following simple result, well-known to experts, whose proof we include for completeness. Recall the definition of $\cc_{\geq k}(\Fdot)$ from \defref{def:charcycle}.

\smallskip

\begin{prop}\label{prop:charstrat} Suppose that $\Adot$ and $\Bdot$ are bounded, constructible complexes of sheaves on the $d$-dimensional analytic space $X$. Suppose that $\strat$ is a stratification with respect to which both $\Adot$ and $\Bdot$ are constructible (which always exists). 

Then, $\cc_{\geq k}(\Adot)=\cc_{\geq k}(\Bdot)$  if and only if, for all $S\in \strat$ such that $\dim S\geq k$, for all $\mathbf p\in S$, there is an equality of Euler characteristics of the stalk cohomology $\chi(\Adot)_{\mathbf p}=\chi(\Bdot)_{\mathbf p}$.

In particular, $\cc(\Adot)=\cc(\Bdot)$  if and only if, for all $\mathbf p\in X$, $\chi(\Adot)_{\mathbf p}=\chi(\Bdot)_{\mathbf p}$.
\end{prop}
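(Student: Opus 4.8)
The plan is to prove the ``local-to-global'' statement by induction on the dimension of the strata, peeling off the top stratum first and using \propref{prop:charslice} to reduce to lower-dimensional situations. First I would fix a common stratification $\strat$ refining both complexes and observe that both conditions in the proposition are statements about the finite data $\{c_S(\Adot), c_S(\Bdot)\}_{S\in\strat}$ and $\{\chi(\Adot)_{\p},\chi(\Bdot)_{\p}\}_{\p\in X}$. The key elementary fact is that the stalk Euler characteristic is constant along each stratum $S$ (since $\Adot,\Bdot$ are $\strat$-constructible), so the right-hand condition is equivalent to: $\chi(\Adot)_S=\chi(\Bdot)_S$ for every $S\in\strat$ with $\dim S\ge k$, where $\chi(\Fdot)_S$ denotes the common stalk Euler characteristic on $S$.

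The heart of the argument is a triangularity relation: for a point $\p$ in a stratum $S$, the stalk Euler characteristic $\chi(\Fdot)_\p$ can be computed from the Morse numbers $c_T(\Fdot)$ for strata $T$ with $S\subseteq\overline T$, via a local Morse-theoretic co-stratified-space decomposition. Concretely, taking a small ball $B^\circ_\epsilon(\p)$, the complex $\Fdot$ restricted there has $\chi(\Fdot)_\p = \hyp^*_{\mathrm{Euler}}(B^\circ_\epsilon(\p)\cap X;\Fdot)$, and iterating the long exact sequence of the pair (normal slice, complex link) across strata in order of increasing dimension yields
$$
\chi(\Fdot)_\p \;=\; \sum_{T\in\strat,\, S\subseteq\overline T}(-1)^{?}\,\#\{\text{local Morse pieces}\}\cdot c_T(\Fdot),
$$
i.e.\ a formula whose ``matrix'' (indexed by strata, ordered by dimension) is upper-triangular with the diagonal entry on stratum $S$ equal to $\pm1$ (coming from $c_S$ itself — the contribution of $S$'s own complex link/normal data). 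I would make this precise using exactly the Morse-module machinery already set up: Remark~\ref{rem:morsemod} gives $m_S^k(\Fdot)\cong H^k(\phi_g[-1]\Fdot)_\p$, and the vanishing-cycle triangle $i^*\Fdot\to \psi_g[-1]\Fdot\to \phi_g[-1]\Fdot\xrightarrow{[1]}$ together with the fact that $\psi_g[-1]\Fdot$ restricted to $g^{-1}(0)$ ``sees'' only strata of strictly smaller dimension in the slice lets one run the induction. The upshot is that the linear map $(c_S(\Fdot))_{\dim S\ge k}\mapsto (\chi(\Fdot)_S)_{\dim S\ge k}$ is invertible (upper-triangular, unipotent up to sign), so the two tuples determine each other.

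Granting that invertibility, the proposition is immediate: $\cc_{\ge k}(\Adot)=\cc_{\ge k}(\Bdot)$ says $c_S(\Adot)=c_S(\Bdot)$ for all $S$ with $\dim S\ge k$, which by the upper-triangular change of basis is equivalent to $\chi(\Adot)_S=\chi(\Bdot)_S$ for all such $S$, which in turn is equivalent to $\chi(\Adot)_\p=\chi(\Bdot)_\p$ for all $\p$ in those strata. The ``in particular'' clause is the case $k=0$ (or $k$ the minimum stratum dimension), noting that the condition for $\dim S\ge 0$ ranges over all strata and hence all points of $X$. I would phrase the whole thing for $\Fdot:=\Adot$ and $\Fdot:=\Bdot$ separately and then compare, or — more slickly — apply it to a complex built from a distinguished triangle, but since characteristic cycles and stalk Euler characteristics are not additive in a way that respects a single ``difference complex'' over a general ring, the cleanest route is the direct triangular-matrix argument above.

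The main obstacle I anticipate is pinning down the precise triangular relation between $\{\chi(\Fdot)_S\}$ and $\{c_S(\Fdot)\}$ with the correct signs and the correct identification of the ``diagonal'' term — essentially re-deriving the local index formula of Kashiwara (that $\chi(\Fdot)_\p$ equals the intersection number of $\cc(\Fdot)$ with the zero-section near $\p$, suitably localized), but in a form adapted to the filtration by stratum dimension. Everything else (constancy of $\chi$ along strata, the long exact sequences, invoking \propref{prop:charslice} and Remark~\ref{rem:morsemod}) is routine; getting a clean, sign-correct statement of the invertible triangular system is where the real work lies. Since the paper describes this as ``well-known to experts,'' I would keep the exposition brief, cite the local index theorem from \cite{kashsch} or \cite{schurbook} for the underlying identity, and emphasize only the stratified/filtered bookkeeping that turns it into the stated equivalence.
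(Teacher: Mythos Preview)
Your approach is essentially the paper's: both arguments rest on the same upper-triangular relation between the Morse numbers $\{c_S(\Fdot)\}$ and the stalk Euler characteristics $\{\chi(\Fdot)_S\}$, and both run a downward induction on stratum dimension. The difference is entirely in execution. You frame the triangular relation as something to be extracted from vanishing-cycle triangles, \propref{prop:charslice}, or the local index theorem, and flag ``pinning down the precise triangular relation'' as the main obstacle. The paper sidesteps all of that machinery: it simply unwinds the definition
\[
c_{S_0}(\Fdot)=\chi\big(\N_{S_0},\cL_{S_0};\Fdot[-k_0]\big)=(-1)^{k_0}\Big(\chi(\Fdot)_{\p_0}-\sum_{\dim S\ge k_0+1}\chi(\cL_{S_0}\cap S)\,\chi(\Fdot)_{\p_S}\Big),
\]
using only that the normal slice contracts to $\p_0$ and that the Euler characteristic of hypercohomology on the complex link is additive over strata. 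This one line \emph{is} the triangular system, with diagonal entry $(-1)^{k_0}$, and the inductive step is then immediate. So your plan is sound, but the obstacle you anticipate is not real: no appeal to \cite{kashsch}, \cite{schurbook}, or \thmref{thm:bdk} is needed (and invoking the last would in fact be forward-referencing within the paper).
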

\begin{proof} The proof is by downward induction on $k$. Certainly the result is trivial for $k=d$. Now suppose that $k_0\geq 0$ and that the statement is true for all $k$ such that $k_0+1\leq k\leq d$; we wish to show that the statement is true for $k=k_0$.

Let $S_0\in\strat$ be a stratum of dimension $k_0$, and let $\mathbf p_0\in S_0$. For each stratum $S$ of dimension greater than or equal to $k_0+1$, let $\p_S$ denote a point of $S$. If we let $\Fdot$ be $\Adot$ or $\Bdot$, then
$$c_{S_0}(\Fdot):=\chi(\N_{S_0}, \cL_{S_0}; \Fdot[-k_0])= \chi(\N_{S_0}; \Fdot[-k_0])-\chi( \cL_{S_0}; \Fdot[-k_0])=$$
$$
(-1)^{k_0}\Big\{\chi(\Fdot)_{\p_0} -\sum_{S, \dim S\geq k_0+1}\chi\big(\cL_{S_0}\cap S\big)\cdot\chi(\Fdot)_{\p_S} \Big\}.
$$
Note that our inductive hypothesis implies that the summation on the right above is the same whether $\Fdot$ equals $\Adot$ or $\Bdot$.

Therefore, $c_{S_0}(\Adot)= c_{S_0}(\Bdot)$ if and only if $\chi(\Adot)_{\p_0}=\chi(\Bdot)_{\p_0}$, and we are finished.
\end{proof}

\medskip

Below, we use $\psi_f$ and $\phi_f$ to denote the nearby and vanishing cycles along $f$, respectively; we also frequently subtract from $f$ the possibly non-zero constant $f(\mathbf p)$ when working at a point $\mathbf p$. Combined with \propref{prop:charstrat}, what we prove below is the well-known fact that the constructible functions given by taking the Euler characteristics of the nearby and vanishing cycles of a complex along a function $f$ depend only on $f$ and the constructible function given by taking the Euler characteristics of the stalks of the complex.

\smallskip

\begin{cor}\label{cor:ccvan} Suppose that $\cc(\Adot)=\cc(\Bdot)$ and that we have a complex analytic $f:X\rightarrow\C$. Then, for all, $\mathbf p\in X$,
$$
\chi(\psi_{f-f(\mathbf p)}\Adot)_{\mathbf p}=\chi(\psi_{f-f(\mathbf p)}\Bdot)_{\mathbf p}\hskip 0.2in\textnormal{and}\hskip 0.2in \chi(\phi_{f-f(\mathbf p)}\Adot)_{\mathbf p}=\chi(\phi_{f-f(\mathbf p)}\Bdot)_{\mathbf p}.
$$
\end{cor}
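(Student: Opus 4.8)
The plan is to reduce the statement to \propref{prop:charstrat} by showing that the Euler characteristics of the stalks of $\psi_{f-f(\p)}\Fdot$ and $\phi_{f-f(\p)}\Fdot$ at a point depend only on $f$ and on the constructible function $\p \mapsto \chi(\Fdot)_\p$. The starting observation is that it suffices to prove the claim for $\psi_f$: since there is a distinguished triangle relating $i^*\Fdot$, $\psi_{f-f(\p)}\Fdot$, and $\phi_{f-f(\p)}\Fdot$ (where $i$ denotes inclusion of the fiber), additivity of Euler characteristics over distinguished triangles gives $\chi(\phi_{f-f(\p)}\Fdot)_\p = \chi(\psi_{f-f(\p)}\Fdot)_\p - \chi(\Fdot)_\p$, and the last term is already equal for $\Adot$ and $\Bdot$ by hypothesis (via \propref{prop:charstrat}, $\cc(\Adot)=\cc(\Bdot)$ forces $\chi(\Adot)_\p = \chi(\Bdot)_\p$ for all $\p$).

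Next I would fix $\p\in X$, translate so $f(\p)=0$, and compute $\chi(\psi_f\Fdot)_\p$ locally. The standard tool here is that the stalk of the nearby cycle is computed by a local Milnor fiber: for $0<|t|\ll\epsilon\ll 1$ one has $\hyp^k(\psi_f\Fdot)_\p \cong \hyp^k\big(B_\epsilon(\p)\cap f^{-1}(t);\, \Fdot\big)$, so that $\chi(\psi_f\Fdot)_\p = \chi\big(B_\epsilon(\p)\cap f^{-1}(t);\, \Fdot\big)$. Now choose a Whitney stratification $\strat$ of $X$ (which also refines $V(f)$) with respect to which both $\Adot$ and $\Bdot$ are constructible. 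The Milnor fiber $M := B_\epsilon(\p)\cap f^{-1}(t)$ is then stratified by its intersections with the strata of $\strat$, and the hypercohomology Euler characteristic is additive and computable stratum-by-stratum: $\chi(M;\Fdot) = \sum_{S\in\strat} \chi_c(M\cap S)\cdot \chi(\Fdot)_{\p_S}$, where $\p_S$ is any point of $S$ and $\chi_c$ denotes Euler characteristic with compact supports. (One uses here that $\Fdot$ is locally constant with stalk Euler characteristic $\chi(\Fdot)_{\p_S}$ on each stratum, together with the additivity of $\chi_c$ over the stratification, i.e. the general principle that hypercohomology Euler characteristic of a constructible complex is the integral of its stalk-Euler-characteristic function against the $\chi_c$-measure.) The coefficients $\chi_c(M\cap S)$ depend only on $f$, $X$, and the stratification — not on the complex — so the whole sum depends on $\Fdot$ only through the function $S\mapsto \chi(\Fdot)_{\p_S}$, which is the same for $\Adot$ and $\Bdot$ by the hypothesis and \propref{prop:charstrat}. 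This gives $\chi(\psi_f\Adot)_\p = \chi(\psi_f\Bdot)_\p$, and then the triangle argument above finishes the vanishing cycle case.

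The main obstacle is making the stratified-additivity formula $\chi(M;\Fdot)=\sum_S \chi_c(M\cap S)\chi(\Fdot)_{\p_S}$ rigorous with the right notion of Euler characteristic (compactly supported versus ordinary) and ensuring the pieces $M\cap S$ are the well-behaved, locally trivial objects needed for this to hold — this is precisely where one invokes that a Whitney stratification of $X$ induces a Thom–Mather/Milnor-fibration structure, so that the Milnor fiber $M$ and its stratified pieces are topologically tame. Since this is a result "well-known to experts," I expect the cleanest route is to cite the constructible-function formalism (as in \cite{schurbook} or \cite{dimcasheaves}): the assignment $\Fdot\mapsto \chi(\Fdot)_{(-)}$ is a group homomorphism to constructible functions factoring through the Grothendieck group, the operations $\psi_f$ and $\phi_f$ descend to this Grothendieck group (which is really the content one must quote), and hence the stalkwise Euler characteristics of $\psi_f\Fdot$ and $\phi_f\Fdot$ are determined. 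Phrased that way, the corollary is an immediate consequence of \propref{prop:charstrat} together with the compatibility of nearby/vanishing cycles with the passage to constructible functions, and the proof can be kept short.
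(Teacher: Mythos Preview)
Your proposal is correct and follows essentially the same route as the paper: compute $\chi(\psi_f\Fdot)_\p$ as the hypercohomology Euler characteristic of the Milnor fiber, expand stratum-by-stratum as $\sum_S \chi(F_{f,\p}\cap S)\chi(\Fdot)_{\p_S}$, invoke \propref{prop:charstrat}, and then pass to $\phi_f$ via the distinguished triangle. The only difference is that the paper writes $\chi$ rather than $\chi_c$ in the stratified sum and does not pause over the distinction; this is harmless here because the pieces $F_{f,\p}\cap S$ are complex analytic (indeed even-real-dimensional Whitney-stratified) spaces, for which ordinary and compactly supported Euler characteristics coincide, so your extra caution is not needed but not wrong either.
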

\begin{proof} For convenience, we shall assume that $f(\p)=0$. Let $F_{f, \p}$ denote the Milnor fiber of $f$ at $\p$. Once again, choose a Whitney stratification $\strat$ with respect to which both $\Adot$ and $\Bdot$ are constructible and, for each $S\in\strat$, select a $\p_S\in S$.

Then,
$$
\chi(\psi_f\Adot)_{\mathbf p}=\chi(F_{f, \p}; \Adot)=\sum_{S\in\strat}\chi(F_{f,\p}\cap S)\cdot\chi(\Adot)_{\p_S}.
$$
By the proposition, this also equals $\chi(\psi_f\Bdot)_{\mathbf p}$.

The result about the vanishing cycles follows immediately since
$$
 \chi(\phi_f\Adot)_{\mathbf p}= \chi(\psi_f\Adot)_{\mathbf p}-\chi(\Adot)_\p = \chi(\psi_f\Bdot)_{\mathbf p}-\chi(\Bdot)_\p= \chi(\phi_f\Bdot)_{\mathbf p}.
$$
\end{proof}

\section{Characteristic Complexes}\label{sec:charcom}

For the remainder of this paper, we fix our base ring to be $\Z$.

\smallskip

Some classical constructions in the study of singular spaces, such as calculating the polar varieties and polar multiplicities of L\^e and Teissier and the Nash modification, deal with contributions from only the smooth strata of $X$. From our point of view, these are results where the underlying complex of sheaves is a {\it characteristic complex}.  

Note that, while our definition of the characteristic cycle in this paper is not what we used in \cite{bmps}, our definition below of a characteristic complex has been adjusted in such a way that the same complexes here and in \cite{bmps} are characteristic complexes.

\begin{defn}\label{def:charcom} Let $X=\bigcup_i X_i$ be the decomposition of $X$ into its irreducible components. 

We say that a complex of sheaves $\Kdot$ on $X$ is a {\bf characteristic complex for $X$} provided that
$$
\cc(\Kdot) = \sum_i (-1)^{\dim X_i}\left[\overline{T^*_{(X_i)_{\operatorname{reg}}}\U}\right].
$$

Thus, $\Kdot$ is a characteristic complex if and only if there exists a complex analytic Whitney stratification $\strat$ of $X$, with connected strata, with respect to which $\Kdot$ is constructible and such that, for all $S\in\strat$, the Euler characteristic $c_S(\Kdot)$ of the Morse modules of  $S$ with respect to $\Kdot$ is zero {\bf unless} $S$ is an open dense subset of one of the $(X_i)_{\operatorname{reg}}$, in which case 
$$c_S(\Kdot)=(-1)^{\dim S}=(-1)^{\dim X_i}.
$$
\end{defn}

\medskip

\begin{rem}\label{rem:vanzero} Note that \remref{rem:morsemod} implies that, if $\mathbf p$ is not an isolated point in $X$ and $\Kdot$ is a characteristic complex for $X$, then for a generic choice of (restricted) linear forms $\call$, $\chi(\phi_{\call-\call(\mathbf p)}\Kdot)_{\mathbf p}=0$, i.e., $\chi(\Kdot)_{\mathbf p}=\chi(\psi_{\call-\call(\mathbf p)}\Kdot)_{\mathbf p}$.
\end{rem}

\medskip

\begin{exm} Suppose that $X$ is a complex manifold with connected components $\{X_i\}_i$. Then it is immediate that $\Z^\bullet_X$ is characteristic complex for $X$.

\end{exm}

\medskip

\begin{prop}\label{prop:ccc} Let $X=\bigcup_i X_i$ be the decomposition of $X$ into its irreducible components.  Suppose that, for each $i$, $\Kdot_i$ is a characteristic complex for $X_i$ and let ${\widehat{\mathbf K}}_i^\bullet$ denote the extension by zero of $\Kdot_i$ to all of $X$. Then, $\bigoplus_i {\widehat{\mathbf K}}_i^\bullet$ is a characteristic complex for $X$.
\end{prop}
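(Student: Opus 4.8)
The plan is to verify the defining identity of \defref{def:charcom} directly, using only the additivity of $\cc$ under distinguished triangles, its invariance under extension by zero, and the irreducibility of the $X_i$; the argument is essentially formal.

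First I would dispose of well-definedness. Since $X$ has (locally) only finitely many irreducible components, $\bigoplus_i \widehat{\mathbf K}_i^\bullet$ is a bounded complex of sheaves on $X$, and it is constructible with respect to a common Whitney stratification refining ones adapted to the individual summands (such a common stratification exists, as already used in \propref{prop:charstrat}). Thus it is legitimate to speak of $\cc\big(\bigoplus_i \widehat{\mathbf K}_i^\bullet\big)$, and it suffices to compute it.

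Next I would compute this characteristic cycle. For each fixed index $i_0$, the canonical split distinguished triangle with vertices $\widehat{\mathbf K}_{i_0}^\bullet$, $\bigoplus_i \widehat{\mathbf K}_i^\bullet$, and $\bigoplus_{i\neq i_0}\widehat{\mathbf K}_i^\bullet$, together with Item 3 of \propref{prop:basicccprops} and an induction on the number of components, gives $\cc\big(\bigoplus_i \widehat{\mathbf K}_i^\bullet\big)=\sum_i \cc\big(\widehat{\mathbf K}_i^\bullet\big)$. Because the characteristic cycle is unchanged by extension by zero (the remark following \defref{def:charcycle}), for each $i$ we have $\cc\big(\widehat{\mathbf K}_i^\bullet\big)=\cc(\Kdot_i)$, the right-hand side being the characteristic cycle of $\Kdot_i$ regarded as a complex of sheaves on $X_i\subseteq\U$. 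Now $X_i$ is irreducible, hence is its own unique irreducible component, so the hypothesis that $\Kdot_i$ is a characteristic complex for $X_i$ says precisely that $\cc(\Kdot_i)=(-1)^{\dim X_i}\big[\overline{T^*_{(X_i)_{\operatorname{reg}}}\U}\big]$. Summing over $i$ yields
$$
\cc\Big(\bigoplus_i \widehat{\mathbf K}_i^\bullet\Big)\;=\;\sum_i (-1)^{\dim X_i}\big[\overline{T^*_{(X_i)_{\operatorname{reg}}}\U}\big],
$$
which is the defining identity in \defref{def:charcom}; hence $\bigoplus_i \widehat{\mathbf K}_i^\bullet$ is a characteristic complex for $X$.

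The one step deserving a word of justification is the invariance of $\cc$ under extension by zero along the closed embedding $X_i\hookrightarrow X$. I would handle this exactly as in the remark after \defref{def:charcycle}: composing with $X\hookrightarrow\U$ reduces it to the invariance of $\cc$ under extension by zero to all of $\U$; alternatively, one observes directly that the Morse module $m_S^k$ of a stratum $S$ depends only on the restriction of the complex to a neighborhood of a point of $S$, so it is unaffected when $S\subseteq X_i$ and the complex is extended by zero off $X_i$, while any stratum not contained in the support of the complex contributes zero multiplicity. Everything else in the proof is formal bookkeeping, so I do not anticipate any genuine obstacle.
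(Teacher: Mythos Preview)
Your proof is correct and follows the same approach as the paper, which simply says the result is immediate from Item 3 of \propref{prop:basicccprops}. You have merely unpacked in detail the additivity under distinguished triangles and the invariance of $\cc$ under extension by zero that the paper leaves implicit.
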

\begin{proof} This is immediate from Item 3 of \propref{prop:basicccprops}.
\end{proof}

\medskip

We wish to give a simple, but important, example of characteristic complexes.

\begin{exm}\label{exm:charcom} Suppose that $X$ pure-dimensional, of dimension $>0$, and has a single singular point at the origin in $\U$. Then,
$$
\cc\left(\Z ^\bullet_X\right) = (-1)^{\dim X}\left[\overline{T^*_{X_{\operatorname{reg}}}\U}\right] + b\left[T^*_{\0}\U\right],
$$
where the final part of \remref{rem:morsemod} tells us that $b=1-\chi(\cL_{X, \0})$.

To produce a characteristic complex for $X$, we must modify $\Z ^\bullet_X$ at the origin. This is easy.

If $b=0$, there is nothing to do; $\Z ^\bullet_X$ would be a characteristic complex. 

\smallskip

If $b<0$, let $\Adot$ be the extension by zero to all of $X$ of the constant sheaf $\Z^{-b}$ on the point-set $\{\0\}$. Then, $\Z ^\bullet_X\oplus \Adot$ is a characteristic complex for $X$.

\smallskip

If $b>0$, let $\Adot$ be the extension by zero to all of $X$ of the shifted constant sheaf $\Z^b[1]$ on the point-set $\{\0\}$. Then, $\Z ^\bullet_X\oplus \Adot$ is a characteristic complex for $X$.

\medskip

Note that, in each of these cases, the stalk cohomology at the origin of the resulting characteristic complex is $\chi(\cL_{X, \0})$.
\end{exm}

\bigskip

The following proposition is well-known to experts in the form: $\cc$ yields a surjection from the Grothendieck group of constructible complexes to the group of Lagrangian cycles. For completeness, we give the proof, which is basically induction on the construction given in \exref{exm:charcom}.

\begin{prop} Let $\strat$ be any Whitney stratification of $X$, with connected strata. Then, there exists a characteristic complex on $X$ which is constructible with respect to $\strat$. In particular, characteristic complexes exist for all $X$.
\end{prop}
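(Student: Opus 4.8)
The plan is to build the characteristic complex stratum-by-stratum, working upward from the lowest-dimensional strata, using the construction of \exref{exm:charcom} as the inductive engine. The guiding principle is that $\cc$ is additive over distinguished triangles (Item 3 of \propref{prop:basicccprops}), so one can correct the characteristic cycle of $\Z^\bullet_X$ one stratum at a time by taking direct sums with shifted constant sheaves supported on strata closures.

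Concretely, I would induct on the dimension $j$ of the strata, with the following inductive hypothesis at stage $j$: there is a complex $\Kdot_{(j)}$ on $X$, constructible with respect to $\strat$, such that $c_S(\Kdot_{(j)})$ already has the ``characteristic'' value (namely $(-1)^{\dim S}$ if $S$ is open dense in some $(X_i)_{\mathrm{reg}}$, and $0$ otherwise) for every stratum $S$ with $\dim S \leq j$. For the base case I would take $\Kdot_{(-1)}$ (or $\Kdot_{(0)}$) to be $\Z^\bullet_X$ itself, or rather start from the complex produced by \propref{prop:ccc} applied to the constant sheaves on the irreducible components of $X$ — this already gets the top-dimensional open strata right. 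For the inductive step from $j-1$ to $j$: let $S$ be a stratum of dimension $j$ that is \emph{not} open dense in any $(X_i)_{\mathrm{reg}}$, and let $b := c_S(\Kdot_{(j-1)})$, the ``unwanted'' coefficient of $[\overline{T^*_S\U}]$ in $\cc(\Kdot_{(j-1)})$; we must kill it. Mimicking \exref{exm:charcom}, pick a point $\p \in S$, take a normal slice $\N_{X,S}$, and let $\Adot_S$ be the extension by zero to $X$ of the appropriately shifted constant sheaf on $\overline{S}$ — more precisely, one needs a complex supported on $\overline S$ whose only nonzero Morse module among strata of dimension $\geq j$ is at $S$ itself, with $c_S$ equal to $-b$; by \propref{prop:charslice} the constant sheaf $\Z^\bullet_{\overline{S}}$ has $c_S(\Z^\bullet_{\overline S}) = (-1)^j$ (since $S$ is open dense in $\overline S$), so one takes $|b|$ copies of $\Z^\bullet_{\overline S}$, shifted by $1$ if the sign of $b$ requires it, and sets $\Kdot_{(j)} := \Kdot_{(j-1)} \oplus \widehat{\Adot}_S$. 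Since adding $\widehat{\Adot}_S$ does not touch $c_{S'}$ for any $S'$ with $\dim S' < j$ (because $\overline S$ meets only strata of dimension $\leq j$ in the relevant conormal-support sense, and lower strata in $\overline S$ contribute via \propref{prop:charslice} but those coefficients are swept up in a subsequent pass — one should instead process all dimension-$j$ strata simultaneously, or order the induction carefully), the inductive hypothesis is preserved for dimensions $< j$ and now holds at $j$ as well. Repeating over all strata of dimension $j$, then incrementing $j$ up to $\dim X$, yields a complex $\Kdot := \Kdot_{(\dim X)}$ with $\cc(\Kdot) = \sum_i (-1)^{\dim X_i}[\overline{T^*_{(X_i)_{\mathrm{reg}}}\U}]$, i.e., a characteristic complex constructible with respect to $\strat$.

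The main obstacle I anticipate is bookkeeping with the \emph{lower}-dimensional strata: when I add $\Z^\bullet_{\overline S}$ to correct $c_S$, this simultaneously changes $c_{S'}$ for strata $S' \subset \overline S \setminus S$ of dimension $< j$ — so the naive ``one pass upward'' is not quite right. The clean fix is to do the induction \emph{downward} is wrong too; rather, one processes strata in order of \emph{increasing} dimension but accepts that correcting a high stratum messes up already-corrected low strata, then re-corrects — except that a correction to a high stratum only ever \emph{adds} (via \propref{prop:charslice}) contributions to strata in its closure, so after finitely many passes it stabilizes. Alternatively, and more elegantly, one fixes the highest-dimensional ``bad'' strata first is also wrong since those don't exist (top strata are always good). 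The genuinely clean approach: induct \emph{on dimension, processing strata of that dimension, but using as the correcting sheaf not $\Z^\bullet_{\overline S}$ but a complex whose support is $\overline S$ yet which has been pre-corrected on $\overline S$ to be a characteristic complex there} — this is exactly \exref{exm:charcom} applied with $\overline S$ in place of $X$ if $\overline S$ has an isolated singularity, and in general requires the very statement being proved, applied to the lower-dimensional space $\overline S$. So the right structure is a \emph{single induction on $\dim X$}: assuming the proposition for all analytic spaces of dimension $< \dim X$, handle $X$ itself. Given that, for each stratum $S$ of dimension $< \dim X$ that needs correcting, build a characteristic complex $\Kdot_{\overline S}$ on $\overline S$ by the inductive hypothesis, extend it by zero to $X$, and add $-b$ (with appropriate shift for sign) copies; because $\Kdot_{\overline S}$ is characteristic on $\overline S$, the only conormal it contributes beyond $\overline{T^*_{S}\U}$ and lower is nothing new above dimension-$(\dim X - 1)$ strata — one checks the bookkeeping closes. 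I expect the remaining verifications — that the shifts produce the right signs, that extension by zero does not disturb $\cc$ (the Remark after \defref{def:charcycle}), and that only finitely many corrections are needed — to be routine, following the pattern of \exref{exm:charcom} verbatim.
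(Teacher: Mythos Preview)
Your proposal eventually lands on a workable idea (induction on $\dim X$, correcting each bad stratum $S$ by adding copies of a characteristic complex for $\overline S$, which by the inductive hypothesis exists and contributes only $(-1)^{\dim S}[\overline{T^*_S\U}]$ to the characteristic cycle). But the route there is tortured, and along the way you explicitly discard the approach the paper actually uses, which is far simpler and dissolves your bookkeeping worries entirely.

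The paper's argument: reduce to $X$ irreducible via \propref{prop:ccc}, then build the complex by \emph{downward} induction on stratum dimension, using as correcting sheaves the extensions by zero of $\Z^\bullet_S$ on the \emph{open} strata $S$ (not on $\overline S$). Start with $\Kdot_d :=$ the extension by zero of $\Z^\bullet_{X_{\operatorname{reg}}}$; this has the right $c_S$ on top-dimensional strata. Having built $\Kdot_{\geq k+1}$ with correct coefficients in dimensions $\geq k+1$, for each stratum $S$ of dimension $k$ add the extension by zero of $(\Z^\bullet_S)^{|c_S(\Kdot_{\geq k+1})|}$, shifted by $1$ if needed to flip the sign, so as to force $c_S$ to $0$. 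The point you missed is that such a correcting sheaf is supported on $\overline S$, hence can only alter $c_{S'}$ for strata $S' \subseteq \overline S$, all of which have dimension $\leq k$. Thus correcting at level $k$ never disturbs the already-fixed coefficients at levels $>k$; it may disturb levels $<k$, but those are handled at later steps of the downward pass. One sweep suffices, no nesting.

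Your assertion that ``downward is wrong too'' is simply mistaken---you gave no reason for it, and there is none. The difficulty you encountered came from two choices: (i) using $\Z^\bullet_{\overline S}$ rather than $\Z^\bullet_S$, and (ii) processing low-dimensional strata first. Either change alone would have fixed the problem; the paper makes both. Your final double-induction (on $\dim X$, then over strata) does work, and has the pleasant feature that corrections to different strata are completely independent, but it relies on the full strength of the proposition on lower-dimensional spaces, whereas the paper's single downward pass is self-contained and more elementary.
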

\begin{proof} This proof is contained in Lemma 3.1 of \cite{hypercohom}. However, we wish to sketch it here.

Note that \propref{prop:ccc} implies that we need deal only with the case where $X$ is irreducible. Hence, we assume that $X$ is irreducible of dimension $d$.

\smallskip

Let $\strat$ be a Whitney stratification of $X$, with connected strata.
\smallskip

For every stratum $S\in \strat$ and every non-negative integer $v$, let $\Udot_{S, v}$ denote the extension by zero to all of $X$ of $\big(\Z^\bullet_S\big)^v$ so that $c_S(\Udot_{S,v})=(-1)^{\dim S}v$ (where $c_S$ is the coefficient of $\big[\overline{T_S^*\U}\big]$ in the characteristic cycle). If $v$ is a negative integer, define $\Udot_{S,v}:=\Udot_{S,-v}[1]$ so that, again,  $c_S(\Udot_{S,v})=(-1)^{\dim S}v$.

Now we construct a characteristic complex as a direct sum, canceling out conormal cycles over lower-dimensional strata.  Let 
$$\Kdot_d=\Kdot_{\geq d} :=\Udot_{X_{\operatorname{reg}}, 1}.
$$
Note that $\Kdot_d$ is also constructible with respect to $\strat$ and, if $S\in \strat$ has dimension $d$, then $c_S(\Kdot_d)=(-1)^d$.

Now we need cancel out the contributions to the characteristic cycle from lower-dimensional strata. 

Let 
$$\Kdot_{d-1}:=\bigoplus_{S\in \strat, \dim S=d-1}\Udot_{S,-c_S(\Kdot_d)},$$
so that $\Kdot_{\geq d-1}:=\Kdot_d\oplus\Kdot_{d-1}$ has the property that, for $S\in \strat$ of dimension at least $d-1$,
$$
c_S\left(\Kdot_{\geq d-1}\right)=
\begin{cases}
(-1)^d, \textnormal{ if } \dim S=d;\\
0, \textnormal{ if }\dim S=d-1.
\end{cases}
$$

Continuing in this manner, we produce $\Kdot:=\Kdot_{\geq 0}$ which is a characteristic complex for $X$.
\end{proof}

\medskip

\begin{prop}\label{prop:kextprod} Let $X$ and $Y$ be analytic spaces, let $\pi_1:X\times Y\rightarrow X$ and $\pi_2:X\rightarrow Y$ denote the projections.  Let $\Kdot_X$ and $\Kdot_Y$ be characteristic complexes for $X$ and $Y$, respectively. Let $\Kdot_X\lboxtimes\Kdot_Y:=\pi_1^*\Kdot_X\lotimes\pi_2^*\Kdot_Y$.

Then, $\Kdot_X\lboxtimes\Kdot_Y$ is a characteristic complex for $X\times Y$.

\end{prop}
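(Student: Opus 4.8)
The plan is to reduce the claim to a pointwise statement about characteristic cycles and then invoke the product formula for Morse modules, \propref{prop:boxprod}. First I would fix irreducible decompositions $X = \bigcup_i X_i$ and $Y = \bigcup_j Y_j$; then the irreducible components of $X\times Y$ are exactly the $X_i\times Y_j$, and $(X_i\times Y_j)_{\operatorname{reg}}$ contains $(X_i)_{\operatorname{reg}}\times (Y_j)_{\operatorname{reg}}$ as an open dense subset (the singular locus of a product being contained in the union of the preimages of the singular loci). Choose Whitney stratifications $\strat$ of $X$ and $\strat'$ of $Y$ with connected strata witnessing that $\Kdot_X$ and $\Kdot_Y$ are characteristic complexes, in the sense of the second characterization in \defref{def:charcom}. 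By \propref{prop:boxprod}, $\Kdot_X\lboxtimes\Kdot_Y$ is constructible with respect to the product stratification $\{S\times S' \mid S\in\strat,\ S'\in\strat'\}$ (whose strata are connected), and $c_{S\times S'}(\Kdot_X\lboxtimes\Kdot_Y) = c_S(\Kdot_X)\cdot c_{S'}(\Kdot_Y)$.

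Next I would simply read off the coefficients. By hypothesis $c_S(\Kdot_X) = 0$ unless $S$ is open dense in some $(X_i)_{\operatorname{reg}}$, in which case $c_S(\Kdot_X) = (-1)^{\dim X_i}$; likewise for $c_{S'}(\Kdot_Y)$. Hence $c_{S\times S'}(\Kdot_X\lboxtimes\Kdot_Y) = 0$ unless $S\times S'$ is open dense in $(X_i)_{\operatorname{reg}}\times(Y_j)_{\operatorname{reg}}$ for some $i,j$, and in that case $c_{S\times S'}(\Kdot_X\lboxtimes\Kdot_Y) = (-1)^{\dim X_i}(-1)^{\dim Y_j} = (-1)^{\dim(X_i\times Y_j)}$. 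Summing over strata gives
$$
\cc(\Kdot_X\lboxtimes\Kdot_Y) \ = \ \sum_{i,j}(-1)^{\dim(X_i\times Y_j)}\left[\overline{T^*_{(X_i\times Y_j)_{\operatorname{reg}}}\U\times\U'}\right],
$$
which is exactly the defining condition of \defref{def:charcom} for $X\times Y$.

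The one genuinely non-formal point — and the step I expect to be the main obstacle — is verifying that a stratum $S\times S'$ that is open dense in $(X_i)_{\operatorname{reg}}\times(Y_j)_{\operatorname{reg}}$ is in fact open dense in $(X_i\times Y_j)_{\operatorname{reg}}$, i.e.\ that $(X_i\times Y_j)_{\operatorname{reg}} = (X_i)_{\operatorname{reg}}\times(Y_j)_{\operatorname{reg}}$ as open dense subsets of $X_i\times Y_j$ (equivalently, that the singular locus of $X_i\times Y_j$ is $((X_i)_{\operatorname{sing}}\times Y_j)\cup(X_i\times (Y_j)_{\operatorname{sing}})$). This is a standard fact for products of reduced complex analytic spaces — locally, a product of smooth germs is smooth and a product with a singular factor is singular — and I would either cite it or dispatch it in a line using that $X\times Y$ is smooth at $(\p,\q)$ iff its local ring is regular, together with the fact that a tensor product of local analytic algebras over $\C$ is regular iff both factors are. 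Everything else is bookkeeping with signs and the already-established \propref{prop:boxprod}.
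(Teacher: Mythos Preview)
Your proposal is correct and follows essentially the same approach as the paper: both reduce immediately to the product formula $c_{S\times S'}(\Kdot_X\lboxtimes\Kdot_Y)=c_S(\Kdot_X)\cdot c_{S'}(\Kdot_Y)$ from \propref{prop:boxprod}. The paper's proof is a one-line citation of that proposition; you have simply filled in the bookkeeping (irreducible components of a product, matching of signs, and the equality $(X_i\times Y_j)_{\operatorname{reg}}=(X_i)_{\operatorname{reg}}\times(Y_j)_{\operatorname{reg}}$) that the paper leaves implicit.
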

\begin{proof} This is immediate from Item 8 of \propref{prop:boxprod}.
\end{proof}

\medskip

\section{The Local Euler Obstruction}\label{sec:leo}

Our primary interest in characteristic complexes lies in their relationship with the local Euler obstruction, defined by MacPherson in \cite{maceuler}. We let $\operatorname{Eu}_{\mathbf p}X$ denote the local Euler obstruction of $X$ at $\mathbf p$, and first recall some well-known results, which appear either explicitly or implicitly in \cite{maceuler}. Note that we slightly extend the usual definition of the local Euler obstruction to possibly non-pure-dimensional spaces by adding over the irreducible components.

\smallskip

\begin{prop}\label{prop:eulerprops}

\ 

\begin{enumerate}
\item The local Euler obstruction is, in fact, local, i.e., if $\W$ is an open neighborhood of $\mathbf p$ in $X$, then $\operatorname{Eu}_{\mathbf p}X=\operatorname{Eu}_{\mathbf p}\W.$
\smallskip
\item If $\mathbf p$ is a smooth point of $X$, then $\operatorname{Eu}_{\mathbf p}X=1$.
\smallskip
\item If $(\mathbf x, \mathbf y)\in X\times Y$, then $\operatorname{Eu}_{(\mathbf x, \mathbf y)}(X\times Y)=\left(\operatorname{Eu}_{\mathbf x}X\right)\left(\operatorname{Eu}_{\mathbf y}Y\right)$.
\smallskip
\item If $\mathbf p\in X$ and $X_i$ denotes the local irreducible components of $X$ at $\mathbf p$, then $\operatorname{Eu}_{\mathbf p}X=\sum_i\operatorname{Eu}_{\mathbf p}X_i.$
\smallskip
\item $\operatorname{Eu}_{\mathbf x}X$ is a constant function of $\mathbf x$ along the strata of any Whitney stratification of $X$ (which has connected strata).
\end{enumerate}
\end{prop}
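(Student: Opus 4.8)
The plan is to deduce all five items from the characterization of the local Euler obstruction in terms of characteristic complexes: for any characteristic complex $\Kdot$ for $X$ on a neighborhood of $\p$, one has $\operatorname{Eu}_{\p}X=\chi(\Kdot)_{\p}$, the Euler characteristic of the stalk cohomology of $\Kdot$ at $\p$. Granting this --- together with the constructions of Sections~\ref{sec:gecc} and~\ref{sec:charcom} --- each property becomes a short computation. Items~(1) and~(2) are in any case also immediate from MacPherson's obstruction-theoretic definition, since the obstruction is computed inside an arbitrarily small ball and there is no obstruction at a smooth point; having independent arguments for these guards against any circularity with the proof of the characterization itself.

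For~(1): if $\Kdot$ is a characteristic complex for $X$, then $\Kdot_{|_{\W}}$ is one for $\W$, since the defining condition on $\cc$ in \defref{def:charcom} is purely local; as $\chi(\Kdot_{|_{\W}})_{\p}=\chi(\Kdot)_{\p}$, we get $\operatorname{Eu}_{\p}\W=\operatorname{Eu}_{\p}X$. For~(2): near a smooth point $\p$, $X$ is a connected complex manifold, and there $\Z^\bullet_X$ is a characteristic complex (the manifold example of Section~\ref{sec:charcom}); its stalk at $\p$ is $\Z$ in degree $0$, so $\operatorname{Eu}_{\p}X=\chi(\Z^\bullet_X)_{\p}=1$.

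For~(3): pick characteristic complexes $\Kdot_X$ for $X$ near $\mathbf x$ and $\Kdot_Y$ for $Y$ near $\mathbf y$. By \propref{prop:kextprod}, $\Kdot_X\lboxtimes\Kdot_Y$ is a characteristic complex for $X\times Y$ near $(\mathbf x,\mathbf y)$, and its stalk there is $(\Kdot_X)_{\mathbf x}\lotimes(\Kdot_Y)_{\mathbf y}$, a derived tensor product of bounded complexes of finitely generated modules over the principal ideal domain $\Z$; hence its Euler characteristic equals $\chi(\Kdot_X)_{\mathbf x}\cdot\chi(\Kdot_Y)_{\mathbf y}$, the $\operatorname{Tor}$ terms of the K\"unneth formula (already visible in \propref{prop:boxprod}) dropping out of the alternating sum. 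With~(1), this gives $\operatorname{Eu}_{(\mathbf x,\mathbf y)}(X\times Y)=(\operatorname{Eu}_{\mathbf x}X)(\operatorname{Eu}_{\mathbf y}Y)$. For~(4): on a small enough neighborhood $\W$ of $\p$, $X$ is the union of its local irreducible components $X_i$ at $\p$; choosing a characteristic complex $\Kdot_i$ for each $X_i$, \propref{prop:ccc} shows $\bigoplus_i\widehat{\mathbf K}^\bullet_i$ is a characteristic complex for $\W$, so by~(1), $\operatorname{Eu}_{\p}X=\chi\big(\bigoplus_i\widehat{\mathbf K}^\bullet_i\big)_{\p}=\sum_i\chi(\Kdot_i)_{\p}=\sum_i\operatorname{Eu}_{\p}X_i$.

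For~(5), which is where this viewpoint pays off most: fix a Whitney stratification $\strat$ of $X$ with connected strata; by the existence proposition of Section~\ref{sec:charcom}, there is a characteristic complex $\Kdot$ for $X$ that is constructible with respect to $\strat$. The stalk cohomology of such a $\Kdot$ is locally constant along each stratum, so $\mathbf x\mapsto\chi(\Kdot)_{\mathbf x}$ is constant on every connected $S\in\strat$; since $\operatorname{Eu}_{\mathbf x}X=\chi(\Kdot)_{\mathbf x}$ by the characterization and~(1), item~(5) follows. I expect no genuinely hard step here --- the substance has been front-loaded into Sections~\ref{sec:gecc} and~\ref{sec:charcom} --- the only point needing care being the multiplicativity in~(3): one must use that $\Z$ is a principal ideal domain so that the stalk of the box product is a perfect complex whose Euler characteristic is multiplicative (equivalently, that the $\operatorname{Tor}$ contributions are invisible to $\chi$). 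By contrast, arguing directly from MacPherson's definition would put the real work into~(3) and~(5) --- tracking the Nash modification and the Nash bundle of a product and along a stratum --- which is precisely the analysis the characteristic-complex reformulation lets us bypass.
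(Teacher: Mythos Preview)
The paper does not actually prove \propref{prop:eulerprops}; it is stated as a recollection of well-known facts ``which appear either explicitly or implicitly in \cite{maceuler}.'' Moreover, the paper \emph{defines} the local Euler obstruction for non-pure-dimensional spaces by summing over irreducible components (the sentence just before the proposition), so Item~(4) is essentially definitional in the paper's framework. Your proposal, by contrast, supplies genuine arguments for all five items via the characterization $\operatorname{Eu}_{\p}X=\chi(\Kdot)_{\p}$ of \corref{cor:eulerob}.

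Your approach is correct and is in the spirit of the paper's stated goal of ``re-deriving'' standard properties from characteristic complexes, but it is logically a different route: you are using \corref{cor:eulerob}, which in the paper's ordering comes \emph{after} \propref{prop:eulerprops} and rests on the externally cited Brylinski--Dubson--Kashiwara theorem. You flag this yourself and give independent obstruction-theoretic arguments for (1) and (2), which is prudent; one should also note that the BDK formula, as stated, already presupposes that $\operatorname{Eu}$ is constant along strata (otherwise the sum over strata is not well-defined), so your derivation of (5) from \corref{cor:eulerob} is really a consistency check rather than an independent proof. What your approach buys is a uniform, sheaf-theoretic explanation for all five properties once the BDK/characteristic-complex machinery is in place; what the paper's citation-only approach buys is logical independence from that machinery, so that \corref{cor:eulerob} can be read as a genuine consequence rather than a reformulation.
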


\medskip

There is also the important result:

\begin{thm}\label{thm:bdk} (Brylinski, Dubson, Kashiwara, \cite{bdk}) Suppose that $\Adot$ on $X$ is constructible with respect a Whitney stratification $\strat$, and that
$$
\cc(\Adot)=\sum_{S\in\strat}c_S(\Adot)\left[\overline{T^*_S\U}\right].
$$
Then, for all $\mathbf p\in X$,
$$
\chi(\Adot)_{\mathbf p}=\sum_{S\in\strat} (-1)^{\dim S}c_S(\Adot)\operatorname{Eu}_{\mathbf p}\overline S,
$$
where we set $\operatorname{Eu}_{\mathbf p}(\overline S)=0$ if $\mathbf p\not\in\overline{S}$.
\end{thm}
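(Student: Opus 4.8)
The plan is to split the argument into a purely formal reduction, using the characteristic‑complex machinery developed above, and an induction on dimension that carries the one genuinely geometric point. First observe that both sides of the asserted identity depend on $\Adot$ only through $\cc(\Adot)$: the right side visibly, and the left side by \propref{prop:charstrat}; moreover both are additive, the left one because Item 3 of \propref{prop:basicccprops} makes $\cc$ — hence $\chi(\cdot)_{\mathbf p}$ — additive over distinguished triangles (and shifting by $[1]$ changes the sign of both $\cc$ and $\chi(\cdot)_{\mathbf p}$). Fix a Whitney stratification $\strat$ with connected strata adapted to $\Adot$. For each $S\in\strat$ the closure $\overline S$ is irreducible (it is the closure of a connected complex manifold), so by the existence of characteristic complexes proved above I may choose a characteristic complex $\Kdot_{\overline S}$ for $\overline S$; its extension by zero $\widehat{\mathbf K}^\bullet_{\overline S}$ to $X$ satisfies $\cc(\widehat{\mathbf K}^\bullet_{\overline S})=(-1)^{\dim S}\big[\overline{T^*_S\U}\big]$. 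Since these cycles generate the group of all cycles $\sum_S c_S\big[\overline{T^*_S\U}\big]$, and both sides are linear in $\cc(\Adot)$, it suffices to verify the theorem for each $\Adot=\widehat{\mathbf K}^\bullet_{\overline S}$; for that complex the left side is $\chi(\Kdot_{\overline S})_{\mathbf p}$ (or $0$ if $\mathbf p\notin\overline S$), while the right side is $(-1)^{\dim S}(-1)^{\dim S}\operatorname{Eu}_{\mathbf p}\overline S=\operatorname{Eu}_{\mathbf p}\overline S$, so the theorem reduces to the following.

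\emph{Key Lemma.} For every irreducible closed analytic $Z\subseteq\U$, every characteristic complex $\Kdot_Z$ for $Z$, and every $\mathbf p\in Z$, one has $\chi(\Kdot_Z)_{\mathbf p}=\operatorname{Eu}_{\mathbf p}Z$. (By \propref{prop:charstrat} the left side is independent of the chosen $\Kdot_Z$, so I am free to use the explicit models of \exref{exm:charcom}; and the reducible case then follows by components via \propref{prop:ccc} together with Item 4 of \propref{prop:eulerprops}.) I would prove the Key Lemma by induction on $d=\dim Z$. The base case — and more generally any smooth point $\mathbf p\in Z_{\operatorname{reg}}$ — is immediate: there $\operatorname{Eu}_{\mathbf p}Z=1$ by Item 2 of \propref{prop:eulerprops}, while on a neighborhood $\W$ of $\mathbf p$ in $Z_{\operatorname{reg}}$ one has $\cc(\Kdot_Z|_{\W})=(-1)^{\dim Z}[T^*_{\W}\U]=\cc(\Z^\bullet_{\W})$ by Item 2 of \propref{prop:basicccprops}, so \propref{prop:charstrat} gives $\chi(\Kdot_Z)_{\mathbf p}=\chi(\Z^\bullet_{\W})_{\mathbf p}=1$.

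For the inductive step, assume the Key Lemma in dimensions $<d$ and let $\dim Z=d\ge1$. Both sides are constant along strata (Item 5 of \propref{prop:eulerprops}; constructibility) and agree at smooth points, so I may take $\mathbf p$ in a singular stratum. Let $\ell$ be a generic affine‑linear form with $\ell(\mathbf p)=0$. On the sheaf side, \remref{rem:vanzero} gives $\chi(\phi_{\ell}\Kdot_Z)_{\mathbf p}=0$, hence
\[
\chi(\Kdot_Z)_{\mathbf p}=\chi(\psi_{\ell}\Kdot_Z)_{\mathbf p}=\chi\big(\cL_{Z,\mathbf p};\Kdot_Z\big),
\]
the hypercohomology Euler characteristic over the complex link. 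Since for $0<|b|\ll1$ the hyperplane $V(\ell-b)$ is generic and avoids $\mathbf p$, it is transverse to every stratum, so $\Kdot_Z|_{Z\cap V(\ell-b)}$ is, up to the usual hyperplane‑section shift, a characteristic complex for the $(d-1)$‑dimensional set $Z\cap V(\ell-b)$; expanding $\chi\big(\cL_{Z,\mathbf p};\Kdot_Z\big)$ over the strata of that slice and invoking the inductive hypothesis (and the base case for its point‑strata) rewrites $\chi(\Kdot_Z)_{\mathbf p}$ as an Euler integral over $\cL_{Z,\mathbf p}$ of the local‑Euler‑obstruction function of the generic slice. I would then match this against the classical slicing formula for $\operatorname{Eu}_{\mathbf p}Z$ — which expresses it through exactly this same complex‑link data weighted by the Euler obstruction of a generic hyperplane section (L\^e–Teissier; equivalently, MacPherson's original Nash‑modification construction) — term by term, which closes the induction and, via the reduction, proves the theorem.

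The main obstacle is the Euler‑obstruction input in the inductive step: the complex‑link / hyperplane‑section formula for $\operatorname{Eu}_{\mathbf p}Z$ is \emph{not} a consequence of the formal properties collected in \propref{prop:eulerprops} (those do not even determine $\operatorname{Eu}$), and must be drawn from the Nash‑modification definition or from polar‑variety theory. Checking that it lines up exactly — signs included — with the sheaf‑side Euler integral is where I expect the real work to concentrate; the reduction of the first paragraph and the smooth base case are formal.
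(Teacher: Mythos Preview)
The paper does not supply its own proof of this theorem; it is simply quoted from \cite{bdk} and then used as a black box to deduce \corref{cor:eulerob} and \corref{cor:indeuler}. So there is nothing in the paper to compare your argument against line by line.

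That said, your strategy is a legitimate reconstruction. The reduction in your first paragraph is entirely correct and formal: both sides depend only on $\cc(\Adot)$ and are additive, and the $\widehat{\mathbf K}^\bullet_{\overline S}$ span, so the theorem is equivalent to your Key Lemma. Note that your Key Lemma is exactly the statement of \corref{cor:eulerob}, which in the paper's logic is \emph{derived from} \thmref{thm:bdk}; you are running the implication the other way, which is fine.

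For the inductive step, your claim that $\Kdot_Z|_{Z\cap V(\ell-b)}$ is a characteristic complex for the slice is correct (a transverse hyperplane preserves normal Morse data, so each $c_{S\cap H}$ picks up a sign and $(-1)^{d+1}=(-1)^{d-1}$; no shift is actually needed). The genuine issue is the one you flag yourself: to close the induction you need, independently of this theorem, the hyperplane-section identity
\[
\operatorname{Eu}_{\mathbf p}Z \;=\; \int_{\cL_{Z,\mathbf p}} \operatorname{Eu}_{(\cdot)}\big(Z\cap V(\ell-b)\big)\,d\chi,
\]
or equivalently $\operatorname{Eu}_{\mathbf q}(Z\cap H)=\operatorname{Eu}_{\mathbf q}Z$ for $\mathbf q$ in a generic hyperplane $H$ together with the \cite{bls}-type link formula. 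In this paper that formula appears only as \corref{cor:indeuler}, itself a consequence of \thmref{thm:bdk}, so you cannot cite it from here without circularity; you must take it from the Nash-modification side (MacPherson's definition, or L\^e--Teissier polar multiplicities). You say this clearly, which is good --- but be aware that this external input is essentially the whole content of the theorem, so your argument is better described as ``\thmref{thm:bdk} is equivalent to \corref{cor:eulerob}, and the latter follows from the L\^e--Teissier/Dubson slicing description of $\operatorname{Eu}$'' rather than as an independent proof.
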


\medskip

{\bf From this, we immediately conclude the fundamental relationship between characteristic complexes and the local Euler obstruction}:

\begin{cor}\label{cor:eulerob} Let $\Kdot$ be a characteristic complex for $X$. Let $\mathbf p\in X$. 

Then,
$$
\operatorname{Eu}_{\mathbf p}X=\chi\big(\Kdot\big)_{\mathbf p}.
$$
\end{cor}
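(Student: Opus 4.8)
The plan is to derive \corref{cor:eulerob} directly from the Brylinski--Dubson--Kashiwara theorem (\thmref{thm:bdk}) together with the defining property of a characteristic complex. First I would fix a complex analytic Whitney stratification $\strat$ of $X$, with connected strata, with respect to which $\Kdot$ is constructible, and such that the Morse-module Euler characteristics $c_S(\Kdot)$ behave as in \defref{def:charcom}: namely $c_S(\Kdot)=0$ unless $S$ is an open dense subset of some $(X_i)_{\operatorname{reg}}$, in which case $c_S(\Kdot)=(-1)^{\dim S}=(-1)^{\dim X_i}$. Such a stratification exists by definition of a characteristic complex.

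Next I would apply \thmref{thm:bdk} to $\Adot=\Kdot$ with this stratification, obtaining
$$
\chi\big(\Kdot\big)_{\mathbf p}=\sum_{S\in\strat}(-1)^{\dim S}c_S(\Kdot)\operatorname{Eu}_{\mathbf p}\overline S.
$$
By the choice of $\strat$, the only nonzero terms come from the strata $S_i$ that are open dense in $(X_i)_{\operatorname{reg}}$; for such a stratum $\overline{S_i}=X_i$ (the closure of the open dense smooth locus of an irreducible component is that component), and $(-1)^{\dim S_i}c_{S_i}(\Kdot)=(-1)^{\dim S_i}(-1)^{\dim S_i}=1$. Hence the sum collapses to
$$
\chi\big(\Kdot\big)_{\mathbf p}=\sum_i\operatorname{Eu}_{\mathbf p}X_i,
$$
where the $X_i$ are exactly the (global, but equivalently — by Item 1 of \propref{prop:eulerprops} — local near $\mathbf p$) irreducible components of $X$. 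Finally, Item 4 of \propref{prop:eulerprops} identifies this sum with $\operatorname{Eu}_{\mathbf p}X$, completing the proof.

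The only genuinely delicate point is the bookkeeping around irreducible components and the extension of $\operatorname{Eu}$ to non-pure-dimensional spaces: one must be careful that the $X_i$ appearing in \defref{def:charcom} (global irreducible components of $X$) match, near $\mathbf p$, the local irreducible components appearing in Item 4 of \propref{prop:eulerprops}, and that for a component $X_i$ with $\mathbf p\notin X_i$ one has $\operatorname{Eu}_{\mathbf p}X_i=0$ consistently with the convention $\operatorname{Eu}_{\mathbf p}(\overline S)=0$ when $\mathbf p\notin\overline S$ in \thmref{thm:bdk}. Modulo these routine identifications — and the localization property of both sides — the argument is a one-line specialization of \thmref{thm:bdk}, so I do not expect any real obstacle; the work is entirely in verifying that the vanishing of $c_S(\Kdot)$ off the top strata makes the BDK sum telescope to precisely $\sum_i\operatorname{Eu}_{\mathbf p}X_i$.
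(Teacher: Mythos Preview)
Your proposal is correct and is precisely the argument the paper has in mind: the paper does not even write out a proof, saying only that the corollary follows ``immediately'' from \thmref{thm:bdk}, and your computation---plugging the characteristic-complex condition $\cc(\Kdot)=\sum_i(-1)^{\dim X_i}\big[\overline{T^*_{(X_i)_{\operatorname{reg}}}\U}\big]$ into the BDK formula and using $\overline{(X_i)_{\operatorname{reg}}}=X_i$ together with the additivity in Item~4 of \propref{prop:eulerprops}---is exactly that immediate deduction, with the routine local-versus-global component bookkeeping made explicit.
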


\medskip

\begin{rem} We note, as in \cite{schurpaper}, Remark 0.1, that much of what we have written can be described just using the language of constructible functions. Suppose that, for every constructible complex of sheaves $\Fdot$, we let $$\alpha_{{}_{\Fdot}}:X\rightarrow \Z
$$
denote the constructible function $\alpha_{{}_{\Fdot}}(\mathbf x):=\chi(\Fdot)_{\mathbf x}$.

Then, it is well-known that $\alpha_{{}_{()}}$ yields a surjection from the Grothendieck group of constructible complexes on $X$ to the group of constructible functions on $X$.

As $\mathbf p \mapsto\operatorname{Eu}_{\mathbf p}X$ is a constructible function, our definition of a characteristic complex $\Kdot$ was precisely designed so that $\alpha_{{}_{\Kdot}}$ equals the local Euler obstruction function.
\end{rem}

\medskip

As a corollary to \corref{cor:eulerob}, and using the additivity of the Euler characteristic of hypercohomology over complex stratifications, we recover the formula of \cite{bls}:

\begin{cor}\label{cor:indeuler} Let $\strat$ be a complex analytic Whitney stratification, with connected strata, of $X$. For each $S\in\strat$, let $\mathbf p_S$ denote a point in $S$. Let $\mathbf p$ be an arbitrary non-isolated point of $X$ and let $\cL_{X, \mathbf p}$ denote the complex link of $X$ at $\mathbf p$.

Then,
$$
\operatorname{Eu}_{\mathbf p}X=\sum_{S\in\strat}\chi(\cL_{X, \mathbf p}\cap S)\operatorname{Eu}_{\mathbf p_S}X.
$$
\end{cor}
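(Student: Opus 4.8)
The plan is to derive Corollary~\ref{cor:indeuler} by combining Corollary~\ref{cor:eulerob} with the decomposition of the Euler characteristic of hypercohomology along a complex analytic stratification. Fix a characteristic complex $\Kdot$ for $X$ which is constructible with respect to the given Whitney stratification $\strat$ (such a complex exists by the proposition preceding Section~\ref{sec:leo}, or after refining $\strat$ one may quote the existence result directly; in fact the stratification need only be fine enough that both $\Kdot$ and the local Euler obstruction function are $\strat$-constructible). By Corollary~\ref{cor:eulerob} we have $\operatorname{Eu}_{\mathbf p}X=\chi(\Kdot)_{\mathbf p}$, so it suffices to prove the identity with $\operatorname{Eu}_{\mathbf p}X$ replaced by $\chi(\Kdot)_{\mathbf p}$ throughout, i.e.
$$
\chi(\Kdot)_{\mathbf p}=\sum_{S\in\strat}\chi(\cL_{X, \mathbf p}\cap S)\,\chi(\Kdot)_{\mathbf p_S}.
$$

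First I would recall, from Remark~\ref{rem:morsemod} applied to a point-stratum (or rather from its proof, which works at any point using a generic linear form), that for a non-isolated point $\mathbf p$ of $X$,
$$
\chi(\psi_{\call-\call(\mathbf p)}\Kdot)_{\mathbf p}=\chi(\cL_{X,\mathbf p};\Kdot)=\hyp^{*}\text{-Euler characteristic of }\cL_{X,\mathbf p}\text{ with coefficients in }\Kdot,
$$
where $\call$ is a generic restricted linear form. Next, since $\Kdot$ is a \emph{characteristic} complex and $\mathbf p$ is not isolated, Remark~\ref{rem:vanzero} gives $\chi(\phi_{\call-\call(\mathbf p)}\Kdot)_{\mathbf p}=0$, hence $\chi(\Kdot)_{\mathbf p}=\chi(\psi_{\call-\call(\mathbf p)}\Kdot)_{\mathbf p}=\chi(\cL_{X,\mathbf p};\Kdot)$. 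Then I would stratify $\cL_{X,\mathbf p}$ by its intersections $\cL_{X,\mathbf p}\cap S$ with the strata of $\strat$ (for $\call$ generic this is a Whitney stratification of the complex link, by transversality), and use additivity of the Euler characteristic of hypercohomology over a complex analytic stratification together with the local constancy of $\chi(\Kdot)_{\mathbf x}$ along each $S$: this yields
$$
\chi(\cL_{X,\mathbf p};\Kdot)=\sum_{S\in\strat}\chi\big(\cL_{X,\mathbf p}\cap S;\Kdot_{|_{\cL_{X,\mathbf p}\cap S}}\big)=\sum_{S\in\strat}\chi(\cL_{X,\mathbf p}\cap S)\,\chi(\Kdot)_{\mathbf p_S},
$$
where in the last equality we used that $\Kdot$ restricted to $\cL_{X,\mathbf p}\cap S$ is cohomologically locally constant with stalk Euler characteristic $\chi(\Kdot)_{\mathbf p_S}$, so the hypercohomology Euler characteristic factors as the product of the topological Euler characteristic of $\cL_{X,\mathbf p}\cap S$ and that constant. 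Replacing $\chi(\Kdot)_{\mathbf p}$ by $\operatorname{Eu}_{\mathbf p}X$ and $\chi(\Kdot)_{\mathbf p_S}$ by $\operatorname{Eu}_{\mathbf p_S}X$ via Corollary~\ref{cor:eulerob} finishes the proof.

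The main obstacle I expect is the bookkeeping around which linear form computes the complex link: the statement as phrased uses $\cL_{X,\mathbf p}$, the complex link of $X$ at $\mathbf p$ (a generic \emph{affine} linear form $L$ with $L(\mathbf p)=0$), whereas Remark~\ref{rem:morsemod} phrases the vanishing-cycle description in terms of $\phi_{\call-\call(\mathbf p)}$ for a generic linear form; one must note that for generic choices these give homeomorphic pairs and that the Whitney stratification $\strat$ transversality (hence the additivity decomposition of $\cL_{X,\mathbf p}$) holds for a generic such choice, so the Euler characteristics $\chi(\cL_{X,\mathbf p}\cap S)$ are well defined. A secondary point to check carefully is that additivity of hypercohomology Euler characteristics over a stratification applies here — this is standard for complex analytic (hence $\C$-constructible) stratifications and bounded constructible complexes, and may simply be cited — and that the constancy of $\chi(\Kdot)_{\mathbf x}$ along strata is exactly the content of $\Kdot$ being $\strat$-constructible (stalk cohomology is locally constant on each stratum). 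Everything else is routine.
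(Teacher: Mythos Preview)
Your proof is correct and follows essentially the same approach as the paper: choose a characteristic complex $\Kdot$, use \remref{rem:vanzero} (that $\chi(\Kdot)_{\mathbf p}=\chi(\psi_{\call}\Kdot)_{\mathbf p}$ for a generic linear form at a non-isolated point), identify the nearby-cycle Euler characteristic with $\chi(\hyp^*(\cL_{X,\mathbf p};\Kdot))$, and then decompose additively over the strata. The extra caveats you raise about genericity of the linear form and additivity over complex stratifications are exactly the points the paper treats as standard and cites in passing.
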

\begin{proof} For convenience, we assume that $\mathbf p=\0$ and that $\call$ is a generic linear form. Let $\Kdot$ be a characteristic complex for $X$. As $\mathbf p$ is not an isolated point of $X$, \remref{rem:vanzero} implies that $\chi(\Kdot)_{\0}=\chi(\psi_{\call}\Kdot)_{\0}$.  

Thus, using the additivity of the Euler characteristic of hypercohomology over complex stratifications, we find
$$
\operatorname{Eu}_{\0}X=  \chi\big(\Kdot\big)_{\0}= \chi(\psi_{\call}\Kdot)_{\0}=\chi\left(\hyp^*(\cL_{X, \0};\,\Kdot)\right) = 
$$
$$
\sum_{S}\chi(\cL_{X, \0}\cap S)\chi(\Kdot)_{\mathbf p_S}=\sum_{S\in\strat}\chi(\cL_{X, \0}\cap S)\operatorname{Eu}_{\mathbf p_S}X.
$$
\end{proof}

\medskip

Before we leave this section, we wish to give a known example of how \corref{cor:indeuler} enables one to calculate local Euler obstructions.

\begin{exm} Suppose that $\mathbf p$ is an isolated singular point of $X$. Then, every point in the complex link, $\cL_{X, \mathbf p}$, is a smooth point of $X$ and, hence, the local Euler obstruction of $X$ at each point of $\cL_{X, \mathbf p}$ is $1$. Thus, we conclude from \corref{cor:indeuler} that
$$
\operatorname{Eu}_\mathbf p X = \chi(\cL_{X, \mathbf p}).
$$

\medskip

In particular, suppose that $X$ is a curve, and $\mathbf p\in X$. Then the complex link $\cL_{X, \mathbf p}$, is a finite collection of points; the number of points is simply the multiplicity, $\operatorname{mult}_\mathbf pX$, of $X$ at $\mathbf p$. Thus, for a curve, we conclude that
$$
\operatorname{Eu}_\mathbf p X =  \chi(\cL_{X, \mathbf p}) =\operatorname{mult}_\mathbf pX.
$$

\medskip

Now suppose that $X$ is 2-dimensional in a neighborhood of $\mathbf p\in X$. Let $S$'s denote  1-dimensional Whitney strata which contain $\mathbf p$ in their closures. Let $\mathbf p_S$ denote an arbitrary point of $S$ near $\mathbf p$. Then, we leave it as an exercise for the reader to use  \corref{cor:indeuler} to conclude that
$$
\operatorname{Eu}_\mathbf p X =  \chi(\cL_{X, \mathbf p})+\sum_S(\operatorname{mult}_\mathbf p\overline S)(\operatorname{mult}_{\mathbf p_S}X-1).
$$
\end{exm}

\medskip

\section{The Relative Local Euler Obstruction}\label{sec:rleo}

 We now wish to discuss the {\it relative local Euler obstruction}, as was introduced  in \cite{bmps}.
 
 \medskip

Recall that $\U$ is an open neighborhood of the origin of $\C^{n+1}$, $X$ is a closed, analytic subset of $\U$. We let $\mathbf z:=(z_0, \dots, z_n)$ be coordinates on $\U$. We identify the cotangent space $T^*\U$ with $\U\times\C^{n+1}$ by mapping $(\p, w_0d_{\p}z_0+\dots+w_nd_{\p}z_n)$ to $(\p, (w_0,\dots, w_n))$. Let $\pi:T^*\U\rightarrow\U$ denote the projection.  

Suppose that we have $\p\in X$ and a complex analytic $f:X\rightarrow\C$. We let $\tilde f$ be a local extension of $f$ at $\p$ to an open neighborhood of $\p$ in $\U$; we assume now that $\U$ is re-chosen to be this (possibly) smaller open neighborhood of $\p$. We also let $d\tilde f$ denote the section of the cotangent bundle to $\U$ given by $d\tilde f(\mathbf x)=(\mathbf x, d_{\mathbf x}\tilde f)$; we let $\operatorname{im}(d\tilde f)$ denote the image of this section in $T^*\U$.

Thus, in coordinates, 
$$\operatorname{im}(d\tilde f) = V\left(w_0-\frac{\partial\tilde f}{\partial z_0}, \dots, w_n-\frac{\partial\tilde f}{\partial z_n} \right).
$$
Note that $\pi$, restricted to $\operatorname{im}(d\tilde f)$, is an isomorphism onto $\U$, with inverse given by $\mathbf x\mapsto (\mathbf x, d_{\mathbf x}\tilde f)$. In particular, we have an isomorphism 
$$\overline{T_{X_{\operatorname{reg}}}^*\U}\cap\operatorname{im}(d\tilde f)\cong \pi\left(\overline{T_{X_{\operatorname{reg}}}^*\U}\cap\operatorname{im}(d\tilde f)\right).
$$
In \cite{critpts}, we gave a name to this last analytic set:

\begin{defn} The {\bf conormal-regular critical locus}, $\Sigma_{\operatorname{cnr}}f$, of $f$ is defined to be
$$
\Sigma_{\operatorname{cnr}}f:= \pi\left(\overline{T_{X_{\operatorname{reg}}}^*\U}\cap\operatorname{im}(d\tilde f)\right) =\Big\{\mathbf p\in \U \ | \ (\mathbf p, d_{\mathbf p}\tilde f)\in \overline{T_{X_{\operatorname{reg}}}^*\U}\Big\}.
$$
\end{defn}

Below, when we take intersection cycles and numbers, we will always be in the case of proper intersections inside the complex analytic manifold $T^*\U$ or inside $\U$ itself. In this case, there are well-defined intersection cycles ({\bf not} cycles classes modulo rational equivalence); see \cite{fulton}.

\bigskip

Assuming that $X$ is pure-dimensional,  the relative local Euler obstruction, $\operatorname{Eu}_{\mathbf p}f$, is defined as an obstruction to extending the conjugate of the stratified gradient vector field of $f$  to a non-zero lift in the Nash modification, provided that $\p$  is a stratified isolated critical point of $f$; see \cite{bmps}.

\medskip

In Corollary 5.4 of \cite{bmps}, we show:

\begin{prop}\label{prop:releuler} Suppose that $X$ is pure-dimensional and that $f:X\rightarrow \C$ has a stratified isolated critical point at $\mathbf p$. Let $\Kdot$ be a characteristic complex for $X$. 
Then, $(\mathbf p, d_{\mathbf p}\tilde f)$ is an isolated point in the intersection $\overline{T_{X_{\operatorname{reg}}}^*\U}\cap\operatorname{im}(d\tilde f)$ (equivalently, $\mathbf p$ is an isolated point in $\Sigma_{\operatorname{cnr}}f$) and

$$
\operatorname{Eu}_{\mathbf p}f =  \chi\big(\phi_{f-f(\mathbf p)}[-1]\Kdot\big)_{\mathbf p}=(-1)^{\dim X}\left(\overline{T^*_{X_{\operatorname{reg}}}\U}\cdot \operatorname{im}(d\tilde f)\right)_{(\mathbf p, d_{\mathbf p}\tilde f)}.
$$
Note that, in the case where $X$ is affine space, this intersection number is the Milnor number of $f-f(\mathbf p)$ at $\mathbf p$.
\end{prop}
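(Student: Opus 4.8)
The plan is to prove the three assertions of \propref{prop:releuler} in turn. For the claim that $(\p, d_{\p}\tilde f)$ is isolated in $\overline{T^*_{X_{\operatorname{reg}}}\U}\cap\operatorname{im}(d\tilde f)$: since $\pi$ restricts to an isomorphism $\operatorname{im}(d\tilde f)\xrightarrow{\ \sim\ }\U$, this intersection is carried homeomorphically onto $\Sigma_{\operatorname{cnr}}f$, so it suffices to show that $\Sigma_{\operatorname{cnr}}f$ is contained in the stratified critical locus of $f$. Fix a Whitney stratification $\strat$ of $X$ witnessing that $\p$ is a stratified isolated critical point of $f$. If $\mathbf q\in\Sigma_{\operatorname{cnr}}f$ lies on a stratum $S\in\strat$, then $(\mathbf q, d_{\mathbf q}\tilde f)\in\overline{T^*_{X_{\operatorname{reg}}}\U}$, and Whitney's condition (a) forces every covector in the fibre of $\overline{T^*_{X_{\operatorname{reg}}}\U}$ over $\mathbf q$ to annihilate $T_{\mathbf q}S$; hence $d_{\mathbf q}\tilde f|_{T_{\mathbf q}S}\equiv 0$, i.e.\ $\mathbf q$ is a stratified critical point of $f$. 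After shrinking $\U$ so that $\p$ is the only stratified critical point of $f$ in $\U$, we get $\Sigma_{\operatorname{cnr}}f\subseteq\{\p\}$, which gives the isolatedness and also shows that the intersection is proper, so that the intersection number in the statement is defined.

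Next, I would prove the identity $\chi\big(\phi_{f-f(\p)}[-1]\Kdot\big)_{\p}=(-1)^{\dim X}\big(\overline{T^*_{X_{\operatorname{reg}}}\U}\cdot\operatorname{im}(d\tilde f)\big)_{(\p,\, d_{\p}\tilde f)}$. Since $X$ is pure-dimensional, $\cc(\Kdot)=(-1)^{\dim X}\big[\overline{T^*_{X_{\operatorname{reg}}}\U}\big]$ by \defref{def:charcom}, so the right-hand side is $\big(\cc(\Kdot)\cdot\operatorname{im}(d\tilde f)\big)_{(\p,\, d_{\p}\tilde f)}$, and it is enough to invoke the microlocal index formula: if $\Fdot$ is constructible and its characteristic variety meets $\operatorname{im}(d\tilde g)$ only at $(\p, d_{\p}\tilde g)$ near that point, then $\chi\big(\phi_{g-g(\p)}[-1]\Fdot\big)_{\p}=\big(\cc(\Fdot)\cdot\operatorname{im}(d\tilde g)\big)_{(\p,\, d_{\p}\tilde g)}$. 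The sign conventions of \defref{def:charcycle} and the shift $[-1]$ are arranged precisely so that this holds with no extra sign --- e.g.\ when $X$ is a manifold and $\Fdot=\Z^\bullet_X[\dim X]$ both sides equal the Milnor number of $g-g(\p)$ at $\p$, using \propref{prop:basicccprops}(2). I would cite this index formula in the form of \cite{numcontrol} or \cite{schurbook}; alternatively one reduces to it directly, since by \corref{cor:ccvan} the left-hand side depends only on $\cc(\Fdot)$, and slicing by a generic linear subspace through $\p$ and applying \propref{prop:charslice} reduces the statement to the classical formula for the Milnor number of an isolated critical point on a smooth space.

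It then remains to identify $\operatorname{Eu}_{\p}f$ with one of these two (now equal) quantities. For this I would unwind MacPherson's obstruction-theoretic definition \cite{maceuler,bmps} on the Nash modification $\nu\colon\widehat X\to X$, with its tautological Nash bundle $\widetilde{T}$ of rank $\dim X$. The covector field $d\tilde f$ induces a section $\sigma$ of the dual Nash bundle $\widetilde{T}^{*}$ by $\sigma(\mathbf x, T)=d_{\mathbf x}\tilde f|_T$; via the Hermitian metric this corresponds to the lift of the conjugate stratified gradient of $f$, and by the first paragraph $\sigma$ is nonvanishing over $\nu^{-1}\big((B^\circ_\epsilon(\p)\cap X)\setminus\{\p\}\big)$, with zero-locus near the central fibre contained in $\nu^{-1}(\p)$. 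Hence $\operatorname{Eu}_{\p}f$ is, by definition, the localized top Chern number of $\widetilde{T}^{*}$ at this zero-locus, living in $H^{2\dim X}\big(\nu^{-1}(B^\circ_\epsilon(\p)\cap X),\ \nu^{-1}(\partial B_\epsilon(\p)\cap X)\big)\cong\Z$. Now the total space of the sub-bundle of $\nu^*T^*\U$ annihilating $\widetilde{T}$ has dimension $\dim\U$ and maps properly onto $\overline{T^*_{X_{\operatorname{reg}}}\U}$, restricting to an isomorphism over $X_{\operatorname{reg}}$, and under this map the zero-locus of $\sigma$ corresponds to $\overline{T^*_{X_{\operatorname{reg}}}\U}\cap\operatorname{im}(d\tilde f)$; comparing the localized Euler class with the corresponding refined intersection product (cf.\ \cite{fulton}), and accounting for the factor $(-1)^{\dim X}$ --- the local degree of complex conjugation on the fibres of a rank-$(\dim X)$ complex bundle, equivalently the perverse normalization built into \defref{def:charcycle} --- yields $\operatorname{Eu}_{\p}f=(-1)^{\dim X}\big(\overline{T^*_{X_{\operatorname{reg}}}\U}\cdot\operatorname{im}(d\tilde f)\big)_{(\p,\, d_{\p}\tilde f)}$. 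Combined with the previous paragraph, this establishes the full chain of equalities. (Alternatively, one can first derive the Morse-theoretic identity $\operatorname{Eu}_{\p}f=\operatorname{Eu}_{\p}X-\sum_{S\in\strat}\chi(F_{f,\p}\cap S)\,\operatorname{Eu}_{\p_S}X$ directly from the obstruction definition and then deduce $\operatorname{Eu}_{\p}f=\chi(\Kdot)_{\p}-\chi(\psi_{f-f(\p)}\Kdot)_{\p}=\chi\big(\phi_{f-f(\p)}[-1]\Kdot\big)_{\p}$ via \corref{cor:eulerob} and the stratified additivity of the Euler characteristic of hypercohomology.)

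The step I expect to be the main obstacle is this last one: faithfully matching MacPherson's obstruction cocycle on the (possibly singular) Nash modification with the intersection number of the conormal variety against $\operatorname{im}(d\tilde f)$, including the precise bookkeeping of the sign $(-1)^{\dim X}$. This is the analogue, for the stratified gradient of $f$ in place of a radial vector field, of the Brylinski--Dubson--Kashiwara comparison underlying \thmref{thm:bdk}, and it is where essentially all of the geometric content resides; the isolatedness of the first paragraph and the microlocal index formula of the second are comparatively routine once that dictionary is in place.
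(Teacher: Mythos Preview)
The paper does not actually prove \propref{prop:releuler}; it is stated with the preface ``In Corollary 5.4 of \cite{bmps}, we show:'' and then used as input. So there is no in-paper argument to compare against, and your task was really to reconstruct the proof from \cite{bmps}.

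Your reconstruction is sound and lines up with how the result is obtained there and with how the paper itself later argues (cf.\ the proof of Items~2 and~4 of \thmref{thm:releuler}). The isolatedness argument via Whitney~(a) is exactly right: for $X$ pure-dimensional every stratum lies in the closure of $X_{\operatorname{reg}}$, so any covector in the fibre of $\overline{T^*_{X_{\operatorname{reg}}}\U}$ over $\mathbf q\in S$ kills $T_{\mathbf q}S$, forcing $\Sigma_{\operatorname{cnr}}f$ into the stratified critical locus. For the second equality you invoke the microlocal/Ginsburg--L\^e index formula; the paper does the same, citing Sabbah \cite{sabbahquel} and Sch\"urmann \cite{schurpaper} rather than \cite{numcontrol}, and your sign check against the smooth case is the correct sanity test for the conventions of \defref{def:charcycle}. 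For the first equality, your Nash-modification argument --- identifying the zero scheme of the section of $\widetilde T^*$ induced by $d\tilde f$ with the preimage of $\overline{T^*_{X_{\operatorname{reg}}}\U}\cap\operatorname{im}(d\tilde f)$ under the proper, generically one-to-one map from the total space of $\widetilde T^{\perp}$ onto $\overline{T^*_{X_{\operatorname{reg}}}\U}$, and then comparing the localized top Chern class with the intersection number via \cite{fulton} --- is the standard route; your alternative via the formula $\operatorname{Eu}_{\p}f=\operatorname{Eu}_{\p}X-\sum_S\chi(F_{f,\p}\cap S)\operatorname{Eu}_{\p_S}X$ is also legitimate and is essentially how \cite{bmps} packages the comparison.

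You correctly identify the third step as the substantive one. The only place to tighten is the passage from ``localized Euler class of $\widetilde T^*$ on $\widehat X$'' to ``intersection number in $T^*\U$'': make explicit that the map from the total space of $\widetilde T^{\perp}$ to $T^*\U$ is proper with degree one onto $\overline{T^*_{X_{\operatorname{reg}}}\U}$, so that the projection formula transports the localized class to the refined intersection with $\operatorname{im}(d\tilde f)$; and be careful that the $(-1)^{\dim X}$ really comes from the use of the \emph{conjugate} gradient (complex antilinear identification of $\widetilde T$ with $\widetilde T^*$), not from any perversity convention, since at this stage $\operatorname{Eu}_{\p}f$ is defined purely obstruction-theoretically.
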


\medskip

We can use \propref{prop:releuler} as the basis for generalizing the definition of the relative local Euler obstruction to (possibly) non-isolated critical points of functions on spaces which need not be pure-dimensional. In \cite{bmps}, we referred to this as the {\it defect} $D_{f, X}$.

\bigskip

\begin{defn}\label{def:releuler}  Let $\Kdot$ be a characteristic complex for $X$. Then, we define the {\bf relative local Euler obstruction of $f$} at $\p\in X$ to be
$$
\operatorname{Eu}_{\mathbf p}f := \chi\big(\phi_{f-f(\mathbf p)}[-1]\Kdot\big)_{\mathbf p}.
$$
Note that $\operatorname{Eu}_{\mathbf p}f$ is well-defined by \corref{cor:ccvan}.
\end{defn}

\medskip

\begin{rem}
 Note that the definition immediately implies that, if $c$ is a constant, then $\operatorname{Eu}_{\mathbf p}f=\operatorname{Eu}_{\mathbf p}(f+c)$. Also, note that, in \cite{bmps}, we referred to this generalized relative local Euler obstruction as the {\it defect} $D_{f, X}$.

In terms of constructible functions, what we have done is to define the relative local Euler obstruction to be the constructible shifted vanishing cycle function along $f$ of the local Euler obstruction.
\end{rem}

\bigskip

The relative local Euler obstruction is related to the local Euler obstructions of strata and the Euler characteristics of the intersections of the various strata with the Milnor fiber, $F_{f-f(\mathbf p), \mathbf p}$,  of $f-f(\mathbf p)$ at $\mathbf p$ via the following theorem. We proved this theorem, in slightly different terms, in \cite{bmps}, but the proof is very short, and we include it for completeness.

\medskip 

\begin{thm}\label{thm:bmpsformula} Let $\strat$ be a complex analytic stratification of $X$ such that the local Euler obstruction of $X$ is constant along the strata (e.g., a Whitney stratification with connected strata). For each $S\in\strat$, let $\mathbf p_S$ be a point in $S$. Then, for $\mathbf p\in X$,
$$
\operatorname{Eu}_{\mathbf p}f = \operatorname{Eu}_{\mathbf p}X -\sum_{S\in\strat}\chi(F_{f-f(\mathbf p), \mathbf p}\cap S)\operatorname{Eu}_{\mathbf p_S}X.
$$
\end{thm}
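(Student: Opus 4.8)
The plan is to start from \defref{def:releuler}: fix any characteristic complex $\Kdot$ for $X$ (the value of $\operatorname{Eu}_{\mathbf p}f$ is independent of this choice), and, for convenience, assume $f(\mathbf p)=0$, writing $g:=f$. By the defining distinguished triangle of the vanishing cycles, $i_{\mathbf p}^*\Kdot\to\psi_g\Kdot\to\phi_g\Kdot\arrow{[1]}i_{\mathbf p}^*\Kdot$, taking Euler characteristics of stalk cohomology at $\mathbf p$ gives $\chi(\phi_g\Kdot)_{\mathbf p}=\chi(\psi_g\Kdot)_{\mathbf p}-\chi(\Kdot)_{\mathbf p}$, exactly as in the proof of \corref{cor:ccvan}. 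Since the shift $[-1]$ multiplies the Euler characteristic by $-1$, I would record
$$\operatorname{Eu}_{\mathbf p}f=\chi\big(\phi_g[-1]\Kdot\big)_{\mathbf p}=\chi(\Kdot)_{\mathbf p}-\chi(\psi_g\Kdot)_{\mathbf p}.$$

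Next I would identify the two terms on the right. By \corref{cor:eulerob}, $\chi(\Kdot)_{\mathbf p}=\operatorname{Eu}_{\mathbf p}X$, which produces the first summand of the asserted formula. For the second term, I would use the standard fact that $(\psi_g\Kdot)_{\mathbf p}$ computes $\hyp^*(F_{g,\mathbf p};\Kdot)$, where $F_{g,\mathbf p}$ is the Milnor fiber of $g$ at $\mathbf p$; then, choosing a Whitney stratification $\strat'$ with connected strata that simultaneously refines $\strat$ and makes $\Kdot$ constructible, additivity of the Euler characteristic of hypercohomology over the induced stratification $\{F_{g,\mathbf p}\cap S'\mid S'\in\strat'\}$ of the Milnor fiber gives, together with one more application of \corref{cor:eulerob},
$$\chi(\psi_g\Kdot)_{\mathbf p}=\sum_{S'\in\strat'}\chi(F_{g,\mathbf p}\cap S')\,\chi(\Kdot)_{\mathbf p_{S'}}=\sum_{S'\in\strat'}\chi(F_{g,\mathbf p}\cap S')\,\operatorname{Eu}_{\mathbf p_{S'}}X.$$

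Finally I would descend from $\strat'$ back to the given $\strat$: every $S'\in\strat'$ lies in a unique $S\in\strat$, and by hypothesis $\operatorname{Eu}_{\mathbf x}X$ is constant along $S$, so $\operatorname{Eu}_{\mathbf p_{S'}}X=\operatorname{Eu}_{\mathbf p_S}X$ whenever $S'\subseteq S$; grouping the sum by the stratum of $\strat$ containing each $S'$ and using $\sum_{S'\subseteq S}\chi(F_{g,\mathbf p}\cap S')=\chi(F_{g,\mathbf p}\cap S)$ collapses it to $\sum_{S\in\strat}\chi(F_{g,\mathbf p}\cap S)\operatorname{Eu}_{\mathbf p_S}X$, and substituting into the first display yields the theorem. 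The only delicate point — the main, though modest, obstacle — is precisely this compatibility step: the hypothesis asks only that $\operatorname{Eu}$ be constant along $\strat$, not that $\Kdot$ be $\strat$-constructible, so the additivity computation must genuinely be run on the finer Whitney refinement $\strat'$ and then transported back; this is legitimate because a common Whitney refinement of $\strat$ and a stratification adapted to $\Kdot$ always exists and $\operatorname{Eu}$ is automatically constant along its connected strata by \propref{prop:eulerprops}(5). Everything else is formal manipulation with the $\psi/\phi$ triangle, the Milnor-fiber description of nearby cycles, and \corref{cor:eulerob}.
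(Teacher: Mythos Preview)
Your proof is correct and follows essentially the same route as the paper's: apply the canonical $\psi/\phi$ distinguished triangle, identify the two resulting terms via \corref{cor:eulerob} and the Milnor-fiber description of $\psi$, and use additivity of the Euler characteristic over strata. Your refinement-and-collapse step with $\strat'$ makes explicit what the paper's proof leaves implicit, namely that additivity over $\strat$ is legitimate because the stalk Euler characteristic $\chi(\Kdot)_{\mathbf x}=\operatorname{Eu}_{\mathbf x}X$ is, by hypothesis, constant along each stratum of $\strat$.
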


\begin{proof} Let $\Kdot$ be a characteristic complex for $X$. Then,
$$
\operatorname{Eu}_{\mathbf p}f =  \chi\big(\phi_{f-f(\mathbf p)}[-1]\Kdot\big)_{\mathbf p}
$$
which, by the canonical distinguished triangle relating the nearby and vanishing cycles, gives us
$$
\operatorname{Eu}_{\mathbf p}f =   -\chi\big(\Kdot[-1]\big)_{\mathbf p} + \chi\big(\psi_{f-f(\mathbf p)}[-1]\Kdot\big)_{\mathbf p} =
$$
$$
\operatorname{Eu}_{\mathbf p}f  - \chi\big(\hyp^*(F_{f-f(\mathbf p)};\,\Kdot)\big)_\mathbf p=\operatorname{Eu}_{\mathbf p}X -\sum_{S\in\strat}\chi(F_{f-f(\mathbf p), \mathbf p}\cap S)\operatorname{Eu}_{\mathbf p_S}X.
$$

\end{proof}

\medskip

\begin{exm}\label{sabbahex}
 We should note that, given \thmref{thm:bmpsformula}, our definition of the relative local Euler obstruction in terms of vanishing cycles implicitly appears in 4.6 of \cite{sabbahquel}.
\end{exm}

The next two examples contain known results; see, especially, \cite{seadetibarverj}.

\smallskip

\begin{exm}\label{exm:easy} Consider the case where $f:(\U, \0)\rightarrow(\C, 0)$ is a non-locally constant function on an open subset $\U\subseteq \C^{n+1}$. Then, \thmref{thm:bmpsformula} tells us that
$$
\operatorname{Eu}_{\0}f= 1-\chi(F_{f,\0})=-\widetilde\chi(F_{f,\0}),
$$
where $\widetilde\chi$ denotes the Euler characteristic of the reduced cohomology.

In particular, if $\0$ is an {\bf isolated} critical point of $f:(\U, \0)\rightarrow(\C, 0)$, then
$$
\operatorname{Eu}_{\0}f= (-1)^{n+1}\mu_\0(f),
$$
where $\mu_\0(f)$ is the Milnor number of $f$ at $\0$.
\end{exm}

\medskip

\begin{exm}\label{exm:curve}
Now suppose that $\0$ is an isolated singular point of $X$, and we have $f:(X, \0)\rightarrow(\C, 0)$. Then, \thmref{thm:bmpsformula} tells us that
$$
\operatorname{Eu}_{\0}f= \operatorname{Eu}_\0X-\chi(F_{f,\0}) =  \chi(\cL_{X, \0})-\chi(F_{f,\0}) .
$$

In particular, if $f$ itself is a generic linear form, then $\operatorname{Eu}_{\0}f=0$. We shall see in Item 2 below that is true for arbitrarily singular spaces.

As another particular case, suppose that $X$ is a curve and that $\tilde f$ is a local extension of $f$ to the ambient affine space such that $\dim_\0 X\cap V(\tilde f)=0$. Then, we conclude that
$$
\operatorname{Eu}_{\0}f= \operatorname{mult}_\0X-(X\cdot V(\tilde f))_\0.
$$
\end{exm}

\bigskip

We now give a number of basic properties of the relative local Euler obstruction.

\smallskip

\begin{thm}\label{thm:releuler}Let $X=\bigcup_i X_i$ be the decomposition of $X$ into its irreducible components, and let $f_i$ denote the restriction of $f$ to $X_i$.

The relative local Euler obstruction has the following properties:
\begin{enumerate}
\item If $f$ is constant in a neighborhood of $\mathbf p$, then $\operatorname{Eu}_{\mathbf p}f=\operatorname{Eu}_{\mathbf p}X$.
\medskip
\item If $\mathbf p\not\in\Sigma_{\operatorname{cnr}}f$, then $\operatorname{Eu}_{\mathbf p}f =0$. In particular, if $\mathbf p$ is not an isolated point of $X$, and $L$ is the restriction of a generic linear form to $X$, then $\operatorname{Eu}_{\mathbf p}L =0$. 

\medskip
\item $\operatorname{Eu}_{\mathbf p}f =\sum_i\operatorname{Eu}_{\mathbf p}f_i$, where we set $\operatorname{Eu}_{\mathbf p}f_i=0$ if $\p\not\in X_i$.
\medskip
\item If $\p$ is an isolated point in $\Sigma_{\operatorname{cnr}}f$, then 
$$
\operatorname{Eu}_{\mathbf p}f= \sum_i(-1)^{\dim X_i}\left(\overline{T^*_{(X_i)_{\operatorname{reg}}}\U}\cdot \operatorname{im}(d\tilde f)\right)_{(\mathbf p, d_{\mathbf p}\tilde f)}.
$$

\medskip
\item Let $\mathbf q\in Y$ and suppose that we have a complex analytic function $g:Y\rightarrow\C$. Let $f\boxplus g$ denote the function from $X\times Y$ to $\C$ given by $$(f\boxplus g)(x,y)=f(x)+g(y).$$ Then,
$$\operatorname{Eu}_{(\p, \mathbf q)}(f\boxplus g)=\operatorname{Eu}_{\mathbf p}f\cdot \operatorname{Eu}_{\mathbf q}g.$$
\medskip
\end{enumerate}
\end{thm}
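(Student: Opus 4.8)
The plan is to deduce every item from the defining characterization $\operatorname{Eu}_{\mathbf p}f=\chi\big(\phi_{f-f(\mathbf p)}[-1]\Kdot\big)_{\mathbf p}$ of \defref{def:releuler}, together with the structural facts about characteristic complexes already established (\propref{prop:ccc}, \propref{prop:kextprod}, \corref{cor:ccvan}) and the intersection-theoretic content of \propref{prop:releuler}.

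Items (1)--(4) are short. For (1), by the constant-shift invariance noted after \defref{def:releuler} we may assume $f\equiv 0$ near $\mathbf p$; then $F_{f-f(\mathbf p),\mathbf p}$ is empty, and \thmref{thm:bmpsformula} gives $\operatorname{Eu}_{\mathbf p}f=\operatorname{Eu}_{\mathbf p}X$. For (3), by \propref{prop:ccc} we may take the characteristic complex on $X$ to be $\bigoplus_i{\widehat{\mathbf K}}_i^\bullet$ with ${\widehat{\mathbf K}}_i^\bullet$ the extension by zero of a characteristic complex for $X_i$; since $\phi_{f-f(\mathbf p)}[-1]$ commutes with finite direct sums and with proper (in particular closed) pushforward --- hence with extension by zero --- and has zero stalk at $\mathbf p$ when $\mathbf p\notin X_i$, taking stalk Euler characteristics yields $\operatorname{Eu}_{\mathbf p}f=\sum_i\operatorname{Eu}_{\mathbf p}f_i$. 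For (2), take $\Kdot$ a characteristic complex for $X$; its characteristic variety is exactly $\overline{T^*_{X_{\operatorname{reg}}}\U}=\bigcup_i\overline{T^*_{(X_i)_{\operatorname{reg}}}\U}$, so if $\mathbf p\notin\Sigma_{\operatorname{cnr}}f$ then $(\mathbf p,d_{\mathbf p}\tilde f)$ lies outside this closed set, hence $\operatorname{im}(d\tilde f)$ misses the characteristic variety of $\Kdot$ over a neighborhood of $\mathbf p$, so $\tilde f$ is non-characteristic for $\Kdot$ there and $\big(\phi_{f-f(\mathbf p)}[-1]\Kdot\big)_{\mathbf p}=0$; the linear-form statement follows because for $\mathbf p$ non-isolated in $X$ a generic linear form has nondegenerate differential at $\mathbf p$ (in the sense of \remref{rem:morsemod}), in particular $(\mathbf p,d_{\mathbf p}\tilde L)\notin\overline{T^*_{X_{\operatorname{reg}}}\U}$. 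For (4), again with $\Kdot$ a characteristic complex, the hypothesis ``$\mathbf p$ isolated in $\Sigma_{\operatorname{cnr}}f$'' says precisely that $(\mathbf p,d_{\mathbf p}\tilde f)$ is an isolated point of $|\cc(\Kdot)|\cap\operatorname{im}(d\tilde f)$, so the intersection-theoretic computation underlying \propref{prop:releuler} --- which needs only this isolatedness, not that $\mathbf p$ be a stratified isolated critical point --- gives $\chi\big(\phi_{f-f(\mathbf p)}[-1]\Kdot\big)_{\mathbf p}=\big(\cc(\Kdot)\cdot\operatorname{im}(d\tilde f)\big)_{(\mathbf p,d_{\mathbf p}\tilde f)}$, and expanding $\cc(\Kdot)=\sum_i(-1)^{\dim X_i}\left[\overline{T^*_{(X_i)_{\operatorname{reg}}}\U}\right]$ produces the stated sum.

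The substantive item is (5). By constant-shift invariance we may replace $f$ by $f-f(\mathbf p)$ and $g$ by $g-g(\mathbf q)$, so we assume $f(\mathbf p)=0$ and $g(\mathbf q)=0$, whence $(f\boxplus g)(\mathbf p,\mathbf q)=0$. Choose characteristic complexes $\Kdot_X$ for $X$ and $\Kdot_Y$ for $Y$; by \propref{prop:kextprod} the box product $\Kdot_X\lboxtimes\Kdot_Y$ is a characteristic complex for $X\times Y$, and since $\operatorname{Eu}_{(\mathbf p,\mathbf q)}(f\boxplus g)$ does not depend on which characteristic complex is used to compute it (\corref{cor:ccvan}), we may evaluate it on $\Kdot_X\lboxtimes\Kdot_Y$. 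The key input is the Sebastiani--Thom isomorphism for the vanishing cycles of a summed function (see, e.g., \cite{schurbook}): since $f$ and $g$ vanish at $\mathbf p$ and $\mathbf q$, there is a natural isomorphism of stalks
$$
\big(\phi_{f\boxplus g}[-1](\Kdot_X\lboxtimes\Kdot_Y)\big)_{(\mathbf p,\mathbf q)}\ \cong\ \big(\phi_{f}[-1]\Kdot_X\big)_{\mathbf p}\lotimes\big(\phi_{g}[-1]\Kdot_Y\big)_{\mathbf q},
$$
with no extra shift once the vanishing cycles are normalized by $[-1]$. Both complexes on the right are bounded and constructible, so their stalks at $\mathbf p$ and $\mathbf q$ are bounded complexes of $\Z$-modules with finitely generated cohomology, and for such complexes --- exactly as in the passage from the Morse-module formula to the identity $c_{S\times S'}(\Adot\lboxtimes\Bdot)=c_S(\Adot)c_{S'}(\Bdot)$ in \propref{prop:boxprod} --- one has $\chi\big(\mathbf A^\bullet\lotimes\mathbf B^\bullet\big)=\chi(\mathbf A^\bullet)\cdot\chi(\mathbf B^\bullet)$, since by the K\"unneth description $H^*(\mathbf A^\bullet\lotimes\mathbf B^\bullet)$ is built from the groups $H^i(\mathbf A^\bullet)\otimes H^j(\mathbf B^\bullet)$ and $\operatorname{Tor}_1^{\Z}\!\big(H^i(\mathbf A^\bullet),H^j(\mathbf B^\bullet)\big)$, and $\operatorname{Tor}_1^{\Z}$ of finitely generated abelian groups is torsion, hence of rank $0$. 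Combining the displayed isomorphism with this multiplicativity,
$$
\operatorname{Eu}_{(\mathbf p,\mathbf q)}(f\boxplus g)=\chi\big(\phi_{f}[-1]\Kdot_X\big)_{\mathbf p}\cdot\chi\big(\phi_{g}[-1]\Kdot_Y\big)_{\mathbf q}=\operatorname{Eu}_{\mathbf p}f\cdot\operatorname{Eu}_{\mathbf q}g.
$$

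The main obstacle is the Sebastiani--Thom isomorphism itself, which is a genuinely nontrivial theorem; the two points demanding care are the shift convention --- the ``$[-1]$'' normalization of the vanishing cycles is precisely what removes a spurious shift from the product formula --- and the fact that away from $V(f)\times V(g)$ the complex $\phi_{f\boxplus g}(\Kdot_X\lboxtimes\Kdot_Y)$ need not vanish, so that the clean product description of its stalks is available only at points of $V(f)\times V(g)$; the normalization $f(\mathbf p)=g(\mathbf q)=0$ is exactly what places $(\mathbf p,\mathbf q)$ there. Everything else is formal manipulation of stalks, direct sums, and the already-established properties of characteristic complexes.
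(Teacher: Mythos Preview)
Your approach matches the paper's essentially line for line: Item 1 via \thmref{thm:bmpsformula}, Item 3 via \propref{prop:ccc} and additivity of $\phi_f$, Items 2 and 4 via the vanishing-cycle index theorem (the paper cites Ginsburg, L\^e, Sabbah, Sch\"urmann, and forward-references \thmref{thm:calceuler}), and Item 5 via \propref{prop:kextprod} plus the Sebastiani--Thom isomorphism (the paper cites \cite{masseysebthom} rather than \cite{schurbook}, but the content is the same).

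There is one imprecision in your Item 2 worth noting. You argue that since $(\mathbf p,d_{\mathbf p}\tilde f)$ lies outside the \emph{characteristic variety} $|\cc(\Kdot)|=\overline{T^*_{X_{\operatorname{reg}}}\U}$, the map $\tilde f$ is ``non-characteristic'' for $\Kdot$ near $\mathbf p$ and hence $\big(\phi_{f-f(\mathbf p)}[-1]\Kdot\big)_{\mathbf p}=0$. But the vanishing of the stalk of $\phi_f\Kdot$ is governed by the \emph{micro-support} $\ms(\Kdot)$, not by $|\cc(\Kdot)|$; for a characteristic complex constructed as in the paper, $\ms(\Kdot)$ may well contain conormals $\overline{T^*_S\U}$ to lower-dimensional strata $S$ along which $c_S(\Kdot)=0$ but the Morse modules are nonzero. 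What you actually need---and what the index theorem (Sabbah 4.5--4.6, Sch\"urmann Cor.~0.3, or \thmref{thm:calceuler} here) delivers---is only that the \emph{Euler characteristic} $\chi\big(\phi_{f-f(\mathbf p)}[-1]\Kdot\big)_{\mathbf p}$ equals the intersection number $\big(\cc(\Kdot)\cdot\operatorname{im}(d\tilde f)\big)_{(\mathbf p,d_{\mathbf p}\tilde f)}$, which is zero when $(\mathbf p,d_{\mathbf p}\tilde f)\notin|\cc(\Kdot)|$. This is exactly the input you invoke for Item 4, so the fix is simply to appeal to the same index formula in Item 2 rather than to a literal vanishing of the stalk.
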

\begin{proof} Item 1 follows at once from \thmref{thm:bmpsformula} or, alternatively from \corref{cor:eulerob} and \defref{def:releuler}.

\medskip

Both Items 2 and 4 essentially follow from the vanishing cycle index theorem of Ginsburg \cite{ginsburg},  
and L\^e \cite{leconcept}, but those papers require stronger hypotheses. However, the full results  appears in 4.5 and 4.6 of Sabbah \cite{sabbahquel}, and also in Corollary 0. 3 of Sch\"urmann \cite{schurpaper}.

We can also conclude the results from looking ahead to  \thmref{thm:calceuler} in \secref{sec:calc}.

\medskip

\noindent Item 3:

\smallskip

Suppose that, for each $i$, $\Kdot_i$ is a characteristic complex for $X_i$ and let ${\widehat{\mathbf K}}_i^\bullet$ denote the extension by zero of $\Kdot_i$ to all of $X$. Then, by \propref{prop:ccc}, $\bigoplus_i {\widehat{\mathbf K}}_i^\bullet$ is a characteristic complex for $X$. Thus,
$$
\operatorname{Eu}_{\mathbf p}f=\chi\Big(\phi_{f-f(\p)}[-1]\bigoplus_i {\widehat{\mathbf K}}_i^\bullet\Big)_\p = \sum_i\chi\big(\phi_{f-f(\p)}[-1] {\widehat{\mathbf K}}_i^\bullet\big)_\p=
$$
$$
 \sum_i\chi\big(\phi_{f_i-f_i(\p)}[-1] {\mathbf K}_i^\bullet\big)_\p=\sum_i \operatorname{Eu}_{\mathbf p}f_i.
$$

\medskip

\vbox{\noindent Item 5:

\smallskip

Let us assume, without loss of generality, that $f(\p)=0$ and $g(\mathbf q)=0$. Let $\Kdot_X$ and $\Kdot_Y$ be characteristic complexes for $X$ and $Y$, Then, we know from \propref{prop:kextprod} that $\Kdot_X\lboxtimes\Kdot_Y$ is a characteristic complex for $X\times Y$.

Now, the derived category version of the Sebastiani-Thom Theorem which we proved in \cite{masseysebthom} (but, here, using the more-usual definition/shift on the vanishing cycles) tells us that
$$
H^k\big(\phi_{f\boxplus g}[-1](\Kdot_X\lboxtimes\Kdot_Y)\big)_{(\p, \mathbf q)}\cong H^k\big(\phi_f[-1]\Kdot_X\lboxtimes\phi_g[-1]\Kdot_Y\big)_{(\p, \mathbf q)}\cong
$$

\noindent $\bigoplus_{i+j=k}H^i(\phi_f[-1]\Kdot_X)_\p\otimes H^j(\phi_g[-1]\Kdot_Y)_{\mathbf q} \ \oplus$\hfill

\smallskip

\hfill $\bigoplus_{i+j=k+1}\operatorname{Tor}\big(H^i(\phi_f[-1]\Kdot_X)_\p, H^j(\phi_g[-1]\Kdot_Y)_{\mathbf q}\big)$.

\medskip

\noindent Item 5 follows.}
\end{proof}

\medskip

\begin{rem} We naturally refer to Item 5 above as the {\it Sebastiani-Thom property} of the relative local Euler obstruction.

We should also remark that the quantity 
$$\left(\overline{T^*_{(X_i)_{\operatorname{reg}}}\U}\cdot \operatorname{im}(d\tilde f)\right)_{(\mathbf p, d_{\mathbf p}\tilde f)}$$
 in Item 5 above can be characterized more geometrically as the number of
non-degenerate critical points of a small perturbation of $f$ by a generic linear form $L$ which occur near the origin on $(X_i)_{\operatorname{reg}}$; see Theorem 3.2 of \cite{hypercohom} and Theorem 1.1 of \cite{micromorse}. Thus, Item 5 is a significant generalization of Proposition 2.3 of \cite{seadetibarverj}, as we do not require that $f$ has a stratified isolated critical point.
\end{rem}

\medskip

\begin{exm} Let $(x, y, z)$ be coordinates on $\U:=\C^3$, and let $(w_0, w_1, w_2)$ be the corresponding cotangent coordinates. Let 
$$X:=V(xy)=V(x)\cup V(y)\subseteq\C^3.$$
Let $\tilde f:\U\rightarrow \C$ be given by $\tilde f(x,y,z) := x+y^2+yz$, and let $f$ be the restriction of $\tilde f$ to $X$.

Note that $f$ does {\bf not} possess a stratified isolated singularity at $\0$, since $\Sigma X=V(x,y)=\{\0\}\times \C$ and $f_{|_{V(x,y)}}\equiv 0$. However, we claim that $(\0, d_\0\tilde f)$ {\bf is} an isolated point in $\overline{T^*_{X_{\operatorname{reg}}}\U}\cap\operatorname{im}d\tilde f$.

We find 
$$
\overline{T^*_{X_{\operatorname{reg}}}\U} = V(x, w_1, w_2) \cup V(y, w_0, w_2)
$$
and
$$
\operatorname{im}d\tilde f = V(w_0-1, w_1-2y-z, w_2-y).
$$

Therefore, $\operatorname{im}d\tilde f$ does not intersect $V(y, w_0, w_2)$, and intersects $V(x, w_1, w_2)$ in the single point $V(x, y, z, w_0-1, w_1, w_2)$.

Hence, Item 4 of \thmref{thm:releuler} tells us that
$$
\operatorname{Eu}_{\0}f= (-1)^2\left(\overline{T^*_{(X)_{\operatorname{reg}}}\U}\cdot \operatorname{im}(d\tilde f)\right)_{(\0, d_{\0}\tilde f)}=
$$
$$
\Big(\big(V(x, w_1, w_2) + V(y, w_0, w_2)\big) \cdot  V(w_0-1, w_1-2y-z, w_2-y)\Big)_{(\0, 1,0,0)} = 1.
$$

\end{exm}

\medskip

\begin{exm} Suppose that $Y$ is a curve through the origin in $\C^2$. Let $g:\C^2\rightarrow C$ be such that $\dim_\0 Y\cap V(g)=0$. Let $X:=Y\times \C$, where we use $z$ as the coordinate on this new copy of $\C$. 

Consider the function $f:(X, \0)\rightarrow (\C, 0)$ given by $f(x,y,z)= g(x,y)+z^b$, for some positive integer $b$.

Then, by Item 5 of \thmref{thm:releuler} - the Sebastiani-Thom property - combined with \exref{exm:easy} and the last part of \exref{exm:curve} - we find that
$$
\operatorname{Eu}_{\0}f = \big(\operatorname{mult}_\0Y-(Y\cdot V(g))_\0\big)(1-b).
$$
\end{exm}
 
 \medskip

\section{Calculating the relative local Euler obstruction}\label{sec:calc}

Here, we recall the algorithm and result which is described in Section 6 of \cite{bmps}.

\smallskip

Once again, let $X=\bigcup_i X_i$ be the decomposition of $X$ into its irreducible components, and let $f_i$ denote the restriction of $f$ to $X_i$. Recall from Item 4 of \thmref{thm:releuler} that $\operatorname{Eu}_{\mathbf p}f =\sum_i\operatorname{Eu}_{\mathbf p}f_i$. Therefore, calculating  $\operatorname{Eu}_{\mathbf p}f$ boils down to needing to calculate in the case where $X$ is irreducible.

\medskip

{\bf Thus, suppose throughout this section that $X$ is irreducible, embedded in $\U$, an open subset of $\C^{n+1}$.}

\medskip

\noindent Recall our previous set-up:

\smallskip

We let $\mathbf z:=(z_0, \dots, z_n)$ be coordinates on $\U$, we identify the cotangent space $T^*\U$ with $\U\times\C^{n+1}$ by mapping $(\p, w_0d_{\p}z_0+\dots+w_nd_{\p}z_n)$ to $(\p, (w_0,\dots, w_n))$, and let $\pi:T^*\U\rightarrow\U$ denote the projection.  

Assume that $\p\in X$, that $\tilde f:\U\rightarrow \C$ is a complex analytic function, and that $f$ is the restriction of $\tilde f$ to $X$. We  let $d\tilde f$ denote the section of the cotangent bundle to $\U$ given by $d\tilde f(\mathbf x)=(\mathbf x, d_{\mathbf x}\tilde f)$; we let $\operatorname{im}(d\tilde f)$ denote the image of this section in $T^*\U$.

Thus, in coordinates, 
$$\operatorname{im}(d\tilde f) = V\left(w_0-\frac{\partial\tilde f}{\partial z_0}, \dots, w_n-\frac{\partial\tilde f}{\partial z_n} \right).
$$

\smallskip

Our method of calculation requires a ``generic'' choice  
of coordinates for the ambient, affine space $\U$. This choice of  
coordinates is made as follows. Refine a Whitney stratification of $X$ to  
a stratification $\W:=\{W_j\}$ such that $\W$ satisfies Thom's  
$a_f$ condition and such that $f^{-1}(0)$ is a union of strata of $\W$ (we are  
not assuming that $\W$ is still a Whitney  
stratification). 

Choose the first coordinate $z_0$ so that the hyperplane $z_0^{-1}(0)$  
transversely intersects, in some neighborhood of the origin, all  
positive-dimensional strata of $\{W_j\}$. Then, there is an induced  
stratification (on the germ at the origin) of $X\cap z_0^{-1}(0)$ given  
by $\{W_j\cap z_0^{-1}(0)\}$. We choose $z_1$ so that $z_1^{-1}(0)$  
transversely intersects, in some neighborhood of the origin, all  
positive-dimensional strata of $\{W_j\cap z_0^{-1}(0)\}$. We continue  
in this inductive manner to produce $z_0, \dots, z_{n+1}$. We call such  
a coordinate choice {\it prepolar} (at the origin).

Prepolar coordinates are not as generic as possible, but they are  
generic enough for our purposes. Being prepolar at the origin implies  
that the coordinates are also prepolar at each point in a neighborhood  
of the origin, and we assume that we are in such a neighborhood  
throughout the remainder of this section. 

Assuming that the coordinates are prepolar for $f$ at the origin, all of the intersections that we write below are proper in $T^*\U$ (resp., in $\U$) in a neighborhood of $(\p, d_\p\tilde f)$ (resp., $\p$).

\smallskip

\vbox{\noindent The algorithm is as follows:

\bigskip

The cycle $\Big[\overline{T^*_{X_{\operatorname{reg}}}\U}\Big]$ can be  
written as a sum of purely $(n+1)$-dimensional cycles  
$$\Big[\overline{T^*_{X_{\operatorname{reg}}}\U}\Big]=\widehat\Gamma^{n+1}_{f, \mathbf z}+ 
\widehat\Lambda^{n+1}_{f, \mathbf z},$$ where no component of $\widehat\Gamma^{n+1}_{f, \mathbf z}$ is  
contained in ${\operatorname{im}}(d\tilde f)$ (i.e. in $\overline{T^*_{X_{\operatorname{reg}}}\U}\cap {\operatorname{im}}(d\tilde f)$) in and every component of  
$\widehat\Lambda^{n+1}_{f, \mathbf z}$ is contained in  ${\operatorname{im}}(d\tilde f)$.
}

Now, we define $\widehat\Gamma^k_{f, \mathbf z}$ and $\widehat\Lambda^k_{f, \mathbf z}$ by  
downward induction. If we have defined the purely $(k+1)$-dimensional  
cycle $\widehat\Gamma^{k+1}_{f, \mathbf z}$, then the hypersurface  
$$\Big[V\Big(w_{k+1}-\frac{\partial \tilde f}{\partial  
z_{k+1}}\Big)\Big]
$$ 
properly intersects $\widehat\Gamma^{k+1}_{f, \mathbf z}$ inside  
$U$, and therefore there is a well-defined, purely $k$-dimensional  
intersection cycle
$$
\widehat\Gamma^{k+1}_{f, \mathbf z} \ \cdot\ \Big[V\Big(w_{k+1}-\frac{\partial  
\tilde f}{\partial z_{k+1}}\Big)\Big],
$$
which we can decompose as
$$\widehat\Gamma^{k+1}_{f, \mathbf z} \ \cdot\ \Big[V\Big(w_{k+1}-\frac{\partial \tilde f}{\partial  
z_{k+1}}\Big)\Big]\ =:\ \widehat\Gamma^k_{f, \mathbf z}+\widehat\Lambda^k_{f, \mathbf z},$$
where no component of $\widehat\Gamma^k_{f, \mathbf z}$ is contained in  
${\operatorname{im}}(d\tilde f)$ and every component of  
$\widehat\Lambda^k_{f, \mathbf z}$ is contained in ${\operatorname{im}}(d\tilde f)$.

As the $\widehat\Lambda^k_{f, \mathbf z}$ are contained in  
${\operatorname{im}}(d\tilde f)$, the projection, $\pi$, maps each  
$\widehat\Lambda^k_{f, \mathbf z}$ isomorphically onto a cycle in $\U$; we let  
$\Lambda^k_{f, \mathbf z}:=\pi_*\big(\widehat\Lambda^k_{f, \mathbf z}\big)$ (this is the proper  
projection of a cycle). We refer  
to $\Lambda^k_{f, \mathbf z}$ as the {\it $k$-dimensional L\^e-Vogel cycle}.

Note that in the case where $X=\U$,  
$\Big[\overline{T^*_{X_{\operatorname{reg}}}\U}\Big]=\U\times \{0\}$, and  
the L\^e-Vogel cycles coincide with the L\^e cycles of \cite{lv1}, \cite{lv2}, and \cite{lecycles}.

\vskip .2in

Now,  $\Lambda^k_{f, \mathbf z}$ properly intersects the linear  
subspace $V(z_0-p_0, \dots, z_{k-1}-p_{k-1})$ at $\p$, and we define the {\it  
$k$-dimensional L\^e-Vogel number of $f$ at $\p$} to be the intersection  
number
$$
\lambda^k_{f, \mathbf z}(\p):= \big(\Lambda^k_{f, \mathbf z}\ \cdot\ V(z_0-p_0, \dots, z_{k-1}-p_{k-1})\big)_\p.
$$

\medskip

\begin{thm}\label{thm:calceuler}\textnormal{(Theorem 6.2 of \cite{bmps})} Suppose that $X$ is irreducible, and let $\Kdot$ be a characteristic complex for $X$.

Let $s:=\dim_\p\Sigma_{\operatorname{cnr}}f$ (where we set $s=-\infty$ if $\p\not\in \Sigma_{\operatorname{cnr}}f$), 
 and assume that the coordinates $\mathbf z$ are prepolar for $f$ at $\0$.

\bigskip
Then, the L\^e-Vogel numbers  
$\lambda^k_{f, \mathbf z}(\p)$ are zero if $k>s$, and $$
\operatorname{Eu}_\p f = \chi(\phi_f[-1]\Kdot)_\p = (-1)^{\dim X}\sum_{k=0}^{s}  
(-1)^k\lambda^k_{f, \mathbf z}(\p).
$$

In particular, when $s=0$, the only L\^e-Vogel number which is possibly  
non-zero is $\lambda^0_{f, \mathbf z}(\p)$, and $\lambda^0_{f, \mathbf z}(\p)=  
\big(\overline{T^*_{X_{\operatorname{reg}}}\U}\ \cdot\  
\operatorname{im}(d\tilde f)\big)_{(\p, d_\p\tilde f)}$; thus, when $s=0$,
 $$
\operatorname{Eu}_\p f = (-1)^{\dim X}\big(\overline{T^*_{X_{\operatorname{reg}}}\U}\ \cdot\  
\operatorname{im}(d\tilde f)\big)_{(\p, d_\p\tilde f)}.
$$
\end{thm}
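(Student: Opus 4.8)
Since $X$ is irreducible and $\Kdot$ is a characteristic complex, we have the exact equality $\cc(\Kdot)=(-1)^{\dim X}\big[\overline{T^*_{X_{\operatorname{reg}}}\U}\big]$, and by \corref{cor:ccvan} the number $\chi(\phi_{f-f(\p)}[-1]\Kdot)_\p$ depends only on $f$ and on this single Lagrangian cycle. Thus the whole statement is really an assertion about the cycle $\big[\overline{T^*_{X_{\operatorname{reg}}}\U}\big]$, its relation to $\operatorname{im}(d\tilde f)$, and the prepolar coordinates $\mathbf z$; the natural plan is an induction on $s:=\dim_\p\Sigma_{\operatorname{cnr}}f$ via a prepolar hyperplane slice, exactly as in the L\^e-number formalism of \cite{lv1,lv2,lecycles,numcontrol} and as carried out in \cite{bmps}. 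The vanishing $\lambda^k_{f,\mathbf z}(\p)=0$ for $k>s$ (and hence the finiteness of the sum) is the first, purely formal, ingredient: every $\widehat\Lambda^k_{f,\mathbf z}$ lies in $\overline{T^*_{X_{\operatorname{reg}}}\U}\cap\operatorname{im}(d\tilde f)$ — the first containment propagates down the construction from $\widehat\Gamma^{\,\mathrm{top}}=\big[\overline{T^*_{X_{\operatorname{reg}}}\U}\big]$, the second is built in — so $\pi$ maps it isomorphically into $\Sigma_{\operatorname{cnr}}f$, a set of dimension $\le s$ near $\p$; since $\Lambda^k_{f,\mathbf z}$ is purely $k$-dimensional it then has no component through $\p$ once $k>s$.

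For the base cases: if $s=-\infty$, then $(\p,d_\p\tilde f)\notin\overline{T^*_{X_{\operatorname{reg}}}\U}=\big|\cc(\Kdot)\big|$, so the standard microlocal support bound for vanishing cycles forces $(\phi_{f-f(\p)}[-1]\Kdot)_\p\simeq 0$, giving $\operatorname{Eu}_\p f=0$ and matching the empty sum. If $s=0$, then $(\p,d_\p\tilde f)$ is an isolated point of the proper intersection $\overline{T^*_{X_{\operatorname{reg}}}\U}\cap\operatorname{im}(d\tilde f)$, and the microlocal index theorem of Ginsburg \cite{ginsburg}, L\^e \cite{leconcept}, Sabbah \cite{sabbahquel} and Sch\"urmann \cite{schurpaper} — in the form that needs only isolatedness of the intersection, not a stratified isolated critical point of $f$ — yields
$$\chi(\phi_{f-f(\p)}[-1]\Kdot)_\p=(-1)^{\dim X}\big(\overline{T^*_{X_{\operatorname{reg}}}\U}\cdot\operatorname{im}(d\tilde f)\big)_{(\p,d_\p\tilde f)},$$
and unwinding the algorithm when $s=0$ shows that $\widehat\Lambda^0_{f,\mathbf z}$ carries precisely this multiplicity while $\widehat\Lambda^k_{f,\mathbf z}$ is empty near $(\p,d_\p\tilde f)$ for $k\ge1$; this simultaneously proves the two ``in particular'' displays.

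For the inductive step, assume $s\ge1$ and the theorem for all smaller values. Prepolarity — together with the $a_f$ condition packaged into the choice of $\mathbf z$ — makes $H:=V(z_0-p_0)$ transverse to the relevant strata, so that (after the generic choice) $X':=X\cap H$ is irreducible of dimension $\dim X-1$, the coordinates $z_1,\dots,z_n$ are prepolar for $f':=f_{|X'}$ with $\dim_\p\Sigma_{\operatorname{cnr}}f'=s-1$, and $\Kdot_{|X'}$ is itself a characteristic complex for $X'$ (the transverse-slice analogue of \propref{prop:charslice}: the transverse slice identifies Morse data of $S\cap H$ in $X'$ with Morse data of $S$ in $X$, shifting the degree by one, so the coefficients of $\cc(\Kdot_{|X'})$ are $-c_S(\Kdot)$, which vanish off the big stratum). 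Relative conormal/polar-curve geometry then gives that the L\^e--Vogel cycles of $f'$ are the hyperplane slices of those of $f$, whence $\lambda^k_{f',\mathbf z'}(\p)=\lambda^{k+1}_{f,\mathbf z}(\p)$ for $0\le k\le s-1$. Feeding these identities into the inductive hypothesis for $\Kdot_{|X'}$ reconstitutes $(-1)^{\dim X}\sum_{j=1}^{s}(-1)^j\lambda^j_{f,\mathbf z}(\p)$; it then remains to produce a cohomological Lefschetz-slice relation — a distinguished triangle (a generalized Wang sequence), in the style of the Iomdin--L\^e and prepolar-slice formulas of \cite{numcontrol} — expressing $\phi_{f-f(\p)}[-1]\Kdot$ at $\p$ in terms of $\phi_{f'-f'(\p)}[-1]\Kdot_{|X'}$ at $\p$ together with a polar-curve correction term whose stalk Euler characteristic, after the higher $\lambda$'s have been absorbed by the slice identities, is exactly $(-1)^{\dim X}\lambda^0_{f,\mathbf z}(\p)$; here one uses the elementary identity that $\widehat\Gamma^{\,\cdot}_{f,\mathbf z}$ has no $0$-dimensional component over $(\p,d_\p\tilde f)$, forcing the last intersection multiplicity to be precisely $\lambda^0_{f,\mathbf z}(\p)$.

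The one genuinely substantial step is this last one — the cohomological Lefschetz-slice relation for the vanishing cycle of a characteristic complex along a prepolar hyperplane, together with the identification of the correction term's contribution with $\lambda^0_{f,\mathbf z}(\p)$. This is where the $a_f$ condition and the genericity in ``prepolar'' are indispensable: they prevent the slice from manufacturing spurious vanishing cohomology and force the correction to be supported on the (relative) polar curve $\Gamma^1_{f,\mathbf z}$ with exactly the right multiplicity. The remaining ingredients — the reduction to a single Lagrangian cycle, the dimension count giving $\lambda^k=0$ for $k>s$, the identification of $\Kdot_{|X'}$ as a characteristic complex for $X'$, the slice identities for the L\^e--Vogel cycles, and the $s\le 0$ base cases — are comparatively routine, so I expect the bulk of the work (and the only real risk) to lie in making the slice formula and its bookkeeping precise.
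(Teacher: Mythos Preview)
The paper does not contain a proof of this theorem: it is stated as ``Theorem~6.2 of \cite{bmps}'' and the section is explicitly introduced with ``Here, we recall the algorithm and result which is described in Section~6 of \cite{bmps}.'' So there is nothing in the present paper to compare your proposal against; the result is simply quoted.

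That said, your sketch is the expected one and is presumably close to what \cite{bmps} does: reduce to a single conormal cycle via the definition of a characteristic complex, handle $s\le 0$ by the Ginsburg--L\^e--Sabbah--Sch\"urmann index theorem (exactly the source the present paper invokes in the proof of \thmref{thm:releuler}), and then induct on $s$ by slicing with the first prepolar hyperplane, invoking a vanishing-cycle Lefschetz/Iomdin--L\^e relation from \cite{numcontrol}. You also correctly isolate that last relation as the only substantial step.

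Two cautions on the details of your inductive step. First, your claim that $X':=X\cap V(z_0-p_0)$ is irreducible is not part of prepolarity and is generally false (already for $\dim X=1$ it is a finite set; and local analytic Bertini does not give irreducibility without extra hypotheses). This is not fatal---the argument really lives at the level of the cycle $\big[\overline{T^*_{X_{\mathrm{reg}}}\U}\big]$ and its hyperplane slice, so you should phrase the induction in terms of Lagrangian cycles (or sum over components of $X'$) rather than asserting that $\Kdot_{|X'}$ is a characteristic complex for an irreducible $X'$. Second, your appeal to \propref{prop:charslice} is not quite on point: that proposition concerns a normal slice to a \emph{stratum}, whereas here you are slicing the whole space by a hyperplane through the $0$-stratum $\{\p\}$; the conclusion you want (that $c_{S\cap H}(\Kdot_{|X'})=-c_S(\Kdot)$ for $\dim S\ge 1$) is still true by the same transversality reasoning, but it needs to be argued directly rather than cited.
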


\bigskip

The serious weakness of \thmref{thm:calceuler} is that there is no effective way to obtain the $a_f$ stratification of $X$ which we need in order to know if our coordinates are prepolar or not. The case where $\p$ is an isolated point of $\Sigma_{\operatorname{cnr}}f$ is nice because the resulting formula is independent of the coordinates. However, we can also handle special $1$-dimensional cases fairly easily.

\medskip

\begin{exm} Suppose $X$ is irreducible at $\p$, of dimension $d$, and suppose that $\dim_\p\Sigma X\leq 1$. Then, near $\p$, there is a Whitney stratification of $X$ consisting of the connected components  of $X_{\operatorname{reg}}$, the connected components of $\Sigma X-\{\p\}$, and $\{\p\}$. 

Now, suppose also that $\dim_\p\overline{\Sigma(f_{|_{X_{\operatorname{reg}}}})}\leq 1$. Let $\Sigma:=\Sigma X\cup {\Sigma(f_{|_{X_{\operatorname{reg}}}})}$. Then there is an $a_f$ stratification consisting of the connected components  of $X_{\operatorname{reg}}-V(f)$, the connected components  of $V(f)-\Sigma$, the connected components of $\Sigma-\{\p\}$, and $\{\p\}$. 

Then, one easily sees that the requirement that $(z_0, z_1, \dots, z_n)$ be polar is equivalent to requiring that, near $\p$,

\begin{itemize}

\item $V(z_0-p_0)$ contains no irreducible component of $\Sigma$;

\item for $0\leq i\leq d-1$, $V(z_0-p_0, \dots, z_i-p_i)$ transversely intersects $X_{\operatorname{reg}}$ near $\p$; and

\item for $0\leq i\leq d-1$, $\Sigma \big(f_{|_{X_{\operatorname{reg}}\cap V(z_0-p_0, \dots, z_i-p_i)}}\big)=\emptyset$.

 \end{itemize}

\medskip

\noindent Let us consider a specific example. 

\medskip

Let $\U:=\C^3$ and let $X:=V(y^2-x^3)\subseteq\U$, where we use coordinates $(t,x,y)$ on $\U$. Thus, $X$ is a cross-product of a cusp and $\C$, and has a Whitney stratification consisting of $\{X-V(x,y), V(x,y)\}$. 

\medskip

We wish to determine $\overline{T^*_{X_{\operatorname{reg}}}\U}$. Using $(t, x, y, w_0, w_1, w_2)$ for coordinates on $T^*\U\cong\U\times\C^3$, one looks at the vanishing of $y^2-x^3$ and the $2\times 2$ minors of the matrix 
$$
\left(\begin{matrix}
0& -3x^2& 2y\\
w_0&\phantom{-}w_1&w_2
\end{matrix}\right),
$$
and disposes of those irreducible components which lie over $\Sigma X = V(x,y)$.

We find that, as analytic sets,
$$
 \overline{T^*_{X_{\operatorname{reg}}}\U} = \overline{V(y^2-x^3, w_0x^2, w_0y, 2w_1y+3w_2x^2)- V(x,y)}=
$$
$$
\overline{V(y^2-x^3, w_0, 2w_1y+3w_2x^2)-V(x,y)}.
$$
Thus, as cycles,
\begin{equation}
V(y^2-x^3, w_0, 2w_1y+3w_2x^2)=  \overline{T^*_{X_{\operatorname{reg}}}\U} \ +  \ mV(x,y, w_0),\tag{$\dagger$}
\end{equation}
for some positive integer $m$.

In fact, it is easy to show that
$$
\overline{T^*_{X_{\operatorname{reg}}}\U} = V(y^2-x^3, w_0, 2w_1y+3w_2x^2, 4w_1^2-9w_2^2x),
$$
but, as this is not defined by a regular sequence, it is somewhat problematic to deal with this in the intersections below, so $(\dagger)$ is more useful.

\smallskip

Let $\tilde f:\U\rightarrow\C$ be given by $\tilde f(t,x,y)= 2y-3tx+t^3$, and let $f:={\tilde f}_{|_X}$. Then one easily checks that 
$$
\overline{\Sigma(f_{|_{X_{\operatorname{reg}}}})} = V(x-t^2, y-t^3),
$$
and $(t,x,y)$ are prepolar coordinates at $\0$. Thus, we may use the method of \thmref{thm:calceuler} to calculate the relative local Euler obstruction.

\smallskip

\noindent We will suppress the reference to the coordinate system $(t,x,y)$ in the subscripts below.

\smallskip

We have
$$
\operatorname{im}(d\tilde f)=V\Big(w_0+3(x-t^2), \,w_1+3t, \,w_2-2\Big), 
$$
and one easily finds that
$$
\overline{T^*_{X_{\operatorname{reg}}}\U}\cap \operatorname{im}(d\tilde f) = V\left(y-t^3, x-t^2, w_0, w_1+3t, w_2-2\right),
$$
which is $1$-dimensional. Note for later that this implies that $\widehat\Lambda^2_{f}=0$.

\smallskip

We wish to proceed with the algorithm described just before \thmref{thm:calceuler}, but we get slightly ``tricky'' in the first intersection, to avoid the problem that $\overline{T^*_{X_{\operatorname{reg}}}\U}$ is not defined by a regular sequence.

\smallskip

We begin:

$$\widehat\Gamma^3_{f} = \overline{T^*_{X_{\operatorname{reg}}}\U} .
$$

Now, we use ($\dagger$):
	
$$
V(y^2-x^3, w_0, 2w_1y+3w_2x^2)\cdot V(w_2-2)=  
$$

$$
\overline{T^*_{X_{\operatorname{reg}}}\U}\cdot V(w_2-2) \ +  \ mV(x,y, w_0)\cdot V(w_2-2).
$$

Thus, we conclude that
\begin{equation}
V(y^2-x^3, w_0, w_1y+3x^2, w_2-2)=\widehat\Gamma^2_{f} + mV(x,y, w_0, w_2-2),\tag{$\ddagger$}
\end{equation}
where we have used our earlier observation that $\widehat\Lambda^2_{f}$ must equal $0$.

We wish to investigate the purely $2$-dimensional cycle 
$$Y:=V(y^2-x^3, w_0, w_1y+3x^2, w_2-2).
$$

As sets, $V(w_1)\cap Y=V(x,y, w_0, w_1, w_2-2)$, which is $1$-dimensional. Therefore, we may calculate the cycle structure of $Y$ by looking at the structure where $w_1\neq 0$.

\smallskip

Via this approach, we find
$$
V(y^2-x^3, w_0, w_1y+3x^2, w_2-2) = 
$$
$$
V(27y+w_1^3, 9x-w_1^2, w_0, w_2-2)+3V(x,y,w_0, w_2-2).
$$

Now ($\ddagger$) implies that
$$
\widehat\Gamma^2_{f} = V(27y+w_1^3, 9x-w_1^2, w_0, w_2-2).
$$

We proceed:
$$
\widehat\Gamma^2_{f} \cdot V(w_1+3t) = V(27y+w_1^3, 9x-w_1^2, w_0, w_2-2) \cdot V(w_1+3t)  =
$$

$$
V\left(y-t^3, x-t^2, w_0, w_1+3t, w_2-2\right),
$$
which as a set equals $\overline{T^*_{X_{\operatorname{reg}}}\U}\cap \operatorname{im}(d\tilde f)$.

Therefore, 
$$\widehat\Gamma^1_{f}=0 \textnormal{ and }\widehat\Lambda^1_{f} =V\left(y-t^3, x-t^2, w_0, w_1+3t, w_2-2\right).$$

Since $\widehat\Gamma^1_{f}=0$, we have that $\widehat\Lambda^1_{f} =0$. Thus there is only one non-zero L\^e-Vogel cycle:
$$
\Lambda^1_{f} =\pi_*\big(\widehat\Lambda^1_{f}\big) =\pi_*\big(V(y-t^3, x-t^2, w_0, w_1+3t, w_2-2)\big)= V(y-t^3, x-t^2),
$$
with corresponding L\^e-Vogel number at the origin:
$$
\lambda^1_f(\0)= \big(V(y-t^3, x-t^2)\cdot V(t)\big)_\0=1.
$$

Finally, \thmref{thm:calceuler} tells us that 
$$\operatorname{Eu}_\0f=(-1)^2(-1)\lambda^1_f(\0) = -1.$$

\vskip 0.2in

With a bit of work, one can use \thmref{thm:bmpsformula} to verify this calculation; we leave this as an exercise for the reader.

\end{exm}

\bigskip

\bibliographystyle{plain}
\bibliography{Masseybib}
\end{document}